\newcommand{\be}{\begin{equation}}
\newcommand{\ee}{\end{equation}}
\newcommand{\beq}{\begin{eqnarray}}
\newcommand{\eeq}{\end{eqnarray}}
\newtheorem{prop}{Proposition}[section]
\newtheorem{remark}[prop]{Remark}
\def\begeq{\begin{equation}}
\def\endeq{\end{equation}}
\def\tr{{\rm tr}}
\def\odot{\setbox0=\hbox{$\bigcirc$}\relax \mathbin {\hbox
to0pt{\raise.5pt\hbox to\wd0{\hfil $\wedge$\hfil}\hss}\box0 }}
\numberwithin{equation} {section}
\numberwithin{equation}{section}
\newtheorem{theorem}{\bf Theorem}[section]
\newtheorem{proposition}[theorem]{\bf Proposition}
\newtheorem{definition}[theorem]{\bf Definition}
\newtheorem{lemma}[theorem]{\bf Lemma}
\begin{document}
\title[curvature estimates for spacelike graphic hypeprsurfaces]
 {Curvature estimates for spacelike graphic hypersurfaces \\in Lorentz-Minkowski space $\mathbb{R}^{n+1}_{1}$}

\author{
 Ya Gao$^{1}$,~~Jie Li$^{1}$,~~Jing Mao$^{1,\ast}$,~~Zhiqi Xie$^{2}$}

\address{
$^{1}$Faculty of Mathematics and Statistics, Key Laboratory of
Applied Mathematics of Hubei Province, Hubei University, Wuhan
430062, China }

\address{
$^{2}$School of Mathematics and Statistics, Yulin University, Yulin,
719000, China }

\email{Echo-gaoya@outlook.com, 2786201989@qq.com, jiner120@163.com,
zhiqi219@126.com}

\thanks{$\ast$ Corresponding author}

\date{}
\maketitle
\begin{abstract}
In this paper, we can obtain curvature estimates for spacelike
admissible graphic hypersurfaces in the $(n+1)$-dimensional
Lorentz-Minkowski space $\mathbb{R}^{n+1}_{1}$, and through which
the existence of spacelike admissible graphic hypersurfaces, with
prescribed $2$-th Weingarten curvature and Dirichlet boundary data,
defined over a strictly convex domain in the hyperbolic plane
$\mathscr{H}^{n}(1)\subset\mathbb{R}^{n+1}_{1}$ of center at origin
and radius $1$, can be proven.
\end{abstract}

\maketitle {\it \small{{\bf Keywords}: Spacelike hypersurfaces,
Lorentz-Minkowski space, curvature estimates, Dirichlet boundary
condition.}

{{\bf MSC 2020}: 35J60, 35J65, 53C50.}}

\section{Introduction} \label{se1}

Throughout this paper, let $\mathbb{R}^{n+1}_{1}$ be the
$(n+1)$-dimensional ($n\geq2$) Lorentz-Minkowski space with the
following Lorentzian metric
\begin{eqnarray*}
\langle\cdot,\cdot\rangle_{L}=dx_{1}^{2}+dx_{2}^{2}+\cdots+dx_{n}^{2}-dx_{n+1}^{2}.
\end{eqnarray*}
In fact, $\mathbb{R}^{n+1}_{1}$ is an $(n+1)$-dimensional Lorentz
manifold with index $1$. Denote by
\begin{eqnarray*}
\mathscr{H}^{n}(1)=\{(x_{1},x_{2},\cdots,x_{n+1})\in\mathbb{R}^{n+1}_{1}|x_{1}^{2}+x_{2}^{2}+\cdots+x_{n}^{2}-x_{n+1}^{2}=-1~\mathrm{and}~x_{n+1}>0\},
\end{eqnarray*}
which is exactly the hyperbolic plane\footnote{~The reason why we
call $\mathscr{H}^{n}(1)$ a hyperbolic plane is that it is a
simply-connected Riemannian $n$-manifold with constant negative
curvature and is geodesically complete.} of center $(0,0,\ldots,0)$
(i.e., the origin of $\mathbb{R}^{n+1}$) and radius $1$ in
$\mathbb{R}^{n+1}_{1}$. Clearly, from the Euclidean viewpoint,
$\mathscr{H}^{2}(1)$ is one component of a hyperboloid of two
sheets.

Assume that
 \begin{eqnarray} \label{G1}
\mathcal{G}:=\{(x,u(x))|x\in M^{n} \subset \mathscr{H}^{n}(1)\}
\end{eqnarray}
is a spacelike graphic hypersurface defined over some bounded piece
$M^{n} \subset \mathscr{H}^{n}(1)$, with the boundary $\partial
M^{n}$, of the hyperbolic plane $\mathscr{H}^{n}(1)$, where
$\sup_{M^{n}}\frac{|Du|}{u}\leq\rho<1$. Let $x$ be a point on
$\mathscr{H}^{n}(1)$ which is described by local coordinates
$\xi^{1},\ldots,\xi^{n}$, that is, $x=x(\xi^{1},\ldots,\xi^{n})$. By
the abuse of notations, let $\partial_i$ be the corresponding
coordinate vector fields on $\mathscr{H}^{n}(1)$ and
$\sigma_{ij}=g_{\mathscr{H}^{n}(1)}(\partial_i,\partial_j)$ be the
induced Riemannian metric on $\mathscr{H}^{n}(1)$. Of course,
$\{\sigma_{ij}\}_{i,j=1,2,\ldots,n}$ is also the metric on
$M^{n}\subset\mathscr{H}^{n}(1)$. Denote by\footnote{~Clearly, for
accuracy, here $D_{i}u$ should be $D_{\partial_{i}}u$. In the
sequel, without confusion and if needed, we prefer to simplify
covariant derivatives like this. In this setting,
$u_{ij}:=D_{j}D_{i}u$, $u_{ijk}:=D_{k}D_{j}D_{i}u$ mean
$u_{ij}=D_{\partial_{j}}D_{\partial_{i}}u$ and
$u_{ijk}=D_{\partial_{k}}D_{\partial_{j}}D_{\partial_{i}}u$,
respectively. } $u_{i}:=D_{i}u$, $u_{ij}:=D_{j}D_{i}u$, and
$u_{ijk}:=D_{k}D_{j}D_{i}u$ the covariant derivatives of $u$ w.r.t.
the metric $g_{\mathscr{H}^{n}(1)}$, where $D$ is the covariant
connection on $\mathscr{H}^{n}(1)$. Let $\nabla$ be the Levi-Civita
connection of $\mathcal{G}$ w.r.t. the metric
$g:=u^{2}g_{\mathscr{H}^{n}(1)}-dr^{2}$ induced from the Lorentzian
metric $\langle\cdot,\cdot\rangle_{L}$ of $\mathbb{R}^{n+1}_{1}$.
Clearly, the tangent vectors of $\mathcal{G}$ are given by
\begin{eqnarray*}
X_{i}=(1,Du)=\partial_{i}+u_{i}\partial_{r}, \qquad i=1,2,\ldots,n.
\end{eqnarray*}
The induced metric $g$ on $\mathcal{G}$ has the form
\begin{equation*}\label{g_{ij}}
g_{ij}=\langle X_{i},X_{j}\rangle_{L}=u^2\sigma_{ij}-u_{i}u_{j},
\end{equation*}
its inverse is given by
\begin{equation*}\label{g^{ij}}
g^{ij}=\frac{1}{u^2}\left(\sigma^{ij}+\frac{u^i  u^j
}{u^2v^{2}}\right),
\end{equation*}
and the future-directed timelike unit normal of $\mathcal{G}$ is
given by
\begin{eqnarray*}\label{nu}
\nu=\frac{1}{v}\left(\partial_r+\frac{1}{u^2}u^j\partial_j\right),
\end{eqnarray*}
where $u^{j}:=\sigma^{ij}u_{i}$ and $v:=\sqrt{1-u^{-2}|D u|^2}$ with
$D u$ the gradient of $u$. Of course, in this paper we use the
Einstein summation convention -- repeated superscripts and
subscripts should be made summation from $1$ to $n$. The second
fundamental form of $\mathcal{G}$ is
\begin{equation}\label{h_{ij}}
h_{ij}=-\langle\overline{\nabla}_{X_{j}}X_{i},\nu\rangle_{L}
=\frac{1}{v}\left(u_{ij}+u \sigma_{ij}-\frac{2}{u}{u_i u_j }\right),
\end{equation}
 with $\overline{\nabla}$ the covariant connection in
 $\mathbb{R}^{n+1}_{1}$.
Denote by $\lambda_{1},\lambda_{2},\ldots,\lambda_{n}$ the principal
curvatures of $\mathcal{G}$, which are actually the eigenvalues of
the matrix $(h_{ij})_{n\times n}$ w.r.t. the metric $g$. The
so-called \emph{$k$-th Weingarten curvature} at
$X=(x,u(x))\in\mathcal{G}$ is defined as
 \begin{eqnarray}  \label{kwc}
\sigma_{k}(\lambda_{1}, \lambda_{2}, \cdots,
\lambda_{n})=\sum\limits_{1\leq i_{1}<i_{2}<\cdots<i_{k}\leq
n}\lambda_{i_{1}}\lambda_{i_{2}}\cdots\lambda_{i_{k}}.
 \end{eqnarray}

\begin{remark}
\rm{ (1) Clearly,
$\sigma_{1}=\lambda_{1}+\lambda_{2}+\cdots+\lambda_{n}$ is actually
the mean curvature $H$ of $\mathcal{G}$ at $X$, while
$\sigma_{n}=\lambda_{1}\lambda_{2}\cdots\lambda_{n}$ denotes the
Gauss-Kronecker curvature of $\mathcal{G}$ at $X$. Since
$\mathcal{G}$ is a spacelike hypersurface in $\mathbb{R}^{n+1}_{1}$,
when $n=2$ the intrinsic Gauss curvature of $\mathcal{G}$ at $X$
should be
$-\sigma_{n}$. \\
(2)  As
 explained and shown by L\'{o}pez \cite{rl}, (in suitable orientation) the mean curvature $H$
 of a surface in $\mathbb{R}^{3}_{1}$
 satisfies\footnote{~Provided the dimension constant is neglected.}
 $H=\epsilon\mathrm{tr}(A)$, where $\epsilon=-1$ if the surface
 is spacelike while $\epsilon=1$ if the surface
 is timelike, and $\mathrm{tr}(A)$ stands for the trace of the second fundamental form
 $A$. However, in his setting, each component $h_{ij}$ of $A$ has
 exactly the opposite sign with the one we have used here (i.e., $h_{ij}=\langle\nabla_{X_{j}}X_{i},\nu\rangle_{L}$ in
 \cite{rl}). But, if we use L\'{o}pez's setting here, for the spacelike
 graphic
 hypersurface $\mathcal{G}$, the mean curvature $H$ is the same
 with our treatment here since $\epsilon=-1$ and $H=-\mathrm{tr}(A)$. Hence, there is no essential difference
 between our setting here and L\'{o}pez's. One might find that
 for curves and surfaces in $\mathbb{R}^{3}_{1}$, L\'{o}pez's setting is more convenient
 than the one we have used here. Both settings have been used by us
 in previous works -- see, e.g., \cite{GaoY,gm3} for the setting
 here and \cite{gm2,glm} for L\'{o}pez's. \\
 (3) In \cite{GaoY2}, Gao and Mao \emph{firstly} considered the
 evolution of spacelike graphic hypersurface, defined over a convex
 piece of $\mathscr{H}^{n}(1)$ and contained in a time cone  in
 $\mathbb{R}^{n+1}_{1}$ ($n\geq2$), along the inverse mean curvature flow (IMCF for short) with
 zero Neumann boundary condition (NBC for short), and showed
 that this flow exists for all the time, the spacelike graphic
 property of the evolving hypersurfaces is preserved along flow, and
 after suitable rescaling, the rescaled hypersurfaces converge to a
 piece of
 the spacelike graph of a constant function defined over
 $\mathscr{H}^{n}(1)$ as time tends to infinity.
 Recently, the anisotropic
 versions of this conclusion (both in $\mathbb{R}^{n+1}_{1}$ and more general Lorentz manifold $M^{n}\times\mathbb{R}$) have been solved (see
 \cite{gm2,gm21}). Besides, the lower dimensional case  has also
 been discussed (see \cite{glm}). If the IMCF in \cite{GaoY2} was
 replaced by the inverse Gauss curvature flow (IGCF for short), we can obtain the
 long-time existence and the asymptotical behavior of the new flow (see
 \cite{gm3}). There is one more thing we would like to mention here -- as
 revealed in (3) of \cite[Remark 1.1]{GaoY2}, although a
 new setting for the mean curvature\footnote{~Also different from the one here.} (different from L\'{o}pez's mentioned in (2)
 above) has been used therein, but for the flow problem considered
 in \cite{GaoY2}
 there would not have essential difference between two
 settings if opposite orientations were used for the timelike unit
 normal vector in the IMCF equation. This kind of phenomenons
 happens in the research of Differential Geometry. For instance, one might find that
 there at least exist two definitions for the $(1,3)$-type curvature tensor on
 Riemannian manifolds, which have opposite sign, but \emph{essentially} same
 fundamental equations (such as the Gauss equation, the Codazzi equation, the Ricci identity,
 etc) can be derived provided necessary settings have been made. \\
 (4) One can easily find that boring trouble on sign would happen if one
 uses L\'{o}pez's setting in \cite{rl} (for the second fundamental form, the mean
 curvature, etc) to deal with the prescribed curvature problems in
 $\mathbb{R}^{n+1}_{1}$. Based on this reason, we prefer to go back
 to our treatment in \cite{GaoY} whose definitions for $h_{ij}$ and
 $H$ are the same with ones here. Through this philosophy, we use
 the setting $\sigma_{n}=\lambda_{1}\lambda_{2}\cdots\lambda_{n}$
 for the Gauss-Kronecker curvature in our study of IGCF with zero
 NBC in $\mathbb{R}^{n+1}_{1}$. Of course, in this situation, the
 orientation for the timelike unit normal vector in the flow
 equation should be past-directed.
}
\end{remark}

We also need the following conception:
\begin{definition}
For $1\leq k\leq n$, let $\Gamma_{k}$ be a cone in $ \mathbb{R}^{n}$
determined by
\begin{eqnarray*}
\Gamma_{k}=\{\lambda\in\mathbb{R}^{n}|\sigma_{l}(\lambda)>0,
~~l=1,2,\ldots,k\}.
\end{eqnarray*}
A smooth spacelike graphic hypersurface
$\mathcal{G}\subset\mathbb{R}^{n+1}_{1}$ is called $k$-admissible if
at every point $X\in\mathcal{G}$,
 $(\lambda_{1},\lambda_{2},\ldots,\lambda_{n})\in\Gamma_{k}$.
\end{definition}

In this paper, we investigate the curvature estimates and then the
existence of solutions for a class of nonlinear partial differential
equations (PDEs for short) given as follows
\begin{equation}\label{main equations}
\left\{
\begin{aligned}
&\sigma_{k}=\psi(x,u,\vartheta), \qquad &&x\in M^{n}\subset
\mathscr{H}^{n}(1)\subset\mathbb{R}^{n+1}_{1},~~k=1,2,\ldots,n,
\\
&u=\varphi, \qquad&&x\in \partial M^{n},
\end{aligned}
\right.
\end{equation}
where $\psi$, depending on $X$, $\vartheta:=-\langle X,
\nu\rangle_{L}$, and $\varphi$ are functions defined on $M^{n}$. The
regularity requirements on functions $\psi$ and $\varphi$ would be
mentioned in curvature estimates below. Obviously, by
(\ref{h_{ij}}), we know that $\sigma_{k}$ in (\ref{main equations})
should be determined by the graphic function $u$ and its
derivatives. Based on this fact, if necessary, sometimes we also
write $\sigma_{k}$ as $\sigma_{k}[u]$ to emphasize this connection.
This simplification will be used similarly in the sequel.

\begin{remark}
\rm{ (1) Clearly, (\ref{main equations}) is a prescribed curvature
problem (PCP for short) with Dirichlet boundary condition (DBC for
short). It is reasonable and feasible to consider the PCP
\begin{eqnarray} \label{ME2}
\sigma_{k}=\psi(x,u,\vartheta)
\end{eqnarray}
over $\mathscr{H}^{n}(1)$ or a piece of it. In fact, (i) if $k=1$
 and $\psi=a$ for some positive constant $a>0$ in (\ref{ME2}), then
 $\mathcal{G}$ should be $\mathscr{H}^{n}(\frac{n}{a})$ or a piece of
 it; (ii) if $k=n$
 and $\psi=a>0$ in (\ref{ME2}), then $\mathcal{G}$ should be $\mathscr{H}^{n}(\frac{1}{\sqrt[n]{a}})$
 or a piece of
 it. Obviously, in these two cases, the graphic function $u(x)$ should
 be constant. Naturally, one might try to know more except these
 relatively simple examples. \\
 (2) Assume that $\Omega\subset\mathbb{R}^{n}$ is smooth bounded and
 strictly
 convex, and that $\psi$ is a smooth positive function. For spacelike graphic
 hypersurfaces $\widetilde{G}:=\{(x,u(x))\in\mathbb{R}^{n+1}_{1}|x\in\Omega\}$ defined over $\Omega\subset\mathbb{R}^{n}$,
 Huang \cite{yh}
 considered the following PCP
\begin{equation} \label{me11}
\left\{
\begin{aligned}
&\sigma_{k}=\psi(x,u,w), \qquad &&x\in \Omega,
\\
&u=\varphi, \qquad&&x\in \partial\Omega,
\end{aligned}
\right.
\end{equation}
where $w=1/\sqrt{1-|Du|^{2}}$, and showed the existence of solutions
to (\ref{me11}) provided $\varphi$ is spacelike, affine and
$\psi^{\frac{1}{k}}(x,u,w)$ has extra growth assumption and
convexity in $w$. It is easy to know that the future-directed
timelike unit normal vector $\widetilde{\nu}$ of spacelike graphic
 hypersurfaces $\widetilde{G}$ therein should be
\begin{eqnarray*}
\widetilde{\nu}=\frac{\partial_r+u^{i}\delta_{ij}\partial_j}{\sqrt{1-|Du|^{2}}}=\frac{(Du,1)}{\sqrt{1-|Du|^{2}}},
\end{eqnarray*}
and $w=-\langle\epsilon_{n+1},\widetilde{\nu}\rangle_{L}$ with
$\epsilon_{n+1}=(0,\ldots,0,1)$ the unit basis of the $x_{n+1}$-axis
of $\mathbb{R}^{n+1}_{1}$. This interesting fact leads to an
observation:

\begin{itemize}

\item \emph{Although a spacelike graphic hypersurface defined
over $M^{n}\subset\mathscr{H}^{n}(1)$ is also spacelike graphic over
$\Omega\subset\mathbb{R}^{n}$ and vice versa, since there exists at
least a diffeomorphism between $\Omega$ and $M^{n}$. However, $w$
cannot equal to $\vartheta$ identically by this diffeomorphism.
Therefore, essentially the PCP (\ref{main equations}) should be
different from Huang's (\ref{me11}).}

\end{itemize}
(3) The PCPs (with or without boundary condition) in Euclidean space
or even more general Riemannian manifolds were extensively studied
-- see, e.g., \cite{cns1,cns2,gm,WJJ} and the references therein for
details. Affected by the study of Geometry of Submanifolds, it is
natural to consider PCPs in the pseudo-Riemannian context. In fact,
except Huang's interesting result mentioned above, many other
important results on PCPs in pseudo-Riemannian manifolds have been
obtained. For instance, in the Lorentz-Minkowski space or general
Lorentz manifolds, Bartnik \cite{b1}, Bartnik-Simon \cite{bs},
Gerhardt \cite{cg1,cg2} solved the Dirichlet problem for the
prescribed mean curvature equation, Delano\`{e} \cite{fd}, Guan
\cite{gb} considered the prescribed Gauss-Kronecker curvature
equation with DBC, while Bayard \cite{Bayard}, Gerhardt \cite{cg3},
Urbas \cite{ju2} worked for the prescribed scalar curvature
equation. }
\end{remark}

For the PCP (\ref{main equations}), first, we can get the following
curvature estimate:
\begin{theorem}\label{main1.1}
 Suppose that $u\in C^{4}(M^{n})\cap C^{2}(\overline{M^{n}} )$ is a spacelike, $k$-admissible solution of the PCP $\mathrm{(\ref{main equations})}$, $0<\psi\in C^{\infty}(\overline{M^{n}}) $ and that $\psi^{\frac{1}{k}}(X,\vartheta)$ is convex in $\vartheta$ and satisfies
\begin{equation}\label{f's condition}
\frac{\partial \psi^{\frac{1}{k}}(X,\vartheta)}{\partial
\vartheta}\cdot \vartheta\geq\psi^{\frac{1}{k}}(X,\vartheta) \qquad
for~ fixed~ X\in\mathcal{G}.
\end{equation}
Then the second fundamental form $A$ of $\mathcal{G}$ satisfies
\begin{equation}\label{A's boundary}
\sup\limits_{M^{n}}||A||\leq C\left(1+\sup\limits_{\partial
M^{n}}||A||\right),
\end{equation}
where $C$ depends only on $n$, $||\varphi||_{C^{1}(\overline{M^{n}}
)}$, $||\psi||_{C^{2}\left(\overline{M^{n}}\times\left[\inf\limits_{\partial M^{n}}u,\sup\limits_{\partial M^{n}}u\right]\times\mathbb{R}\right)}$.\\
\end{theorem}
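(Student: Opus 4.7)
The plan is a reduction of the global second-order estimate to the boundary estimate via the maximum principle, applied to a carefully chosen test function on $\mathcal{G}$. First, I would set up the linearized operator $L = F^{ij}\nabla_i\nabla_j$, where $F^{ij} = \partial\sigma_k/\partial h_{ij}$ is evaluated at the solution; on $\Gamma_k$ this operator is elliptic, the trace $\sum F^{ii}$ has a quantitative positive lower bound coming from Newton–Maclaurin inequalities, and $F^{ij,rs}$ is negative semidefinite in the sense of concavity of $\sigma_k^{1/k}$. I would then introduce the test function
\begin{equation*}
W(x,\xi) \;=\; \log\bigl(h(\xi,\xi)\bigr) \;+\; \phi(\vartheta) \;+\; A\, u(x),
\end{equation*}
where $\xi \in T_x\mathcal{G}$ is a $g$-unit vector, $\phi$ is a one-variable function to be chosen (I would try $\phi(t)=-\log(t-a)$ for a suitable constant $a$ bounded in terms of $\sup_{\partial M^n}|u|$), and $A>0$ is a large constant to be fixed in the course of the argument. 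If $\max W$ is attained on $\partial M^n$, the bound \eqref{A's boundary} follows at once, so the interesting case is when the maximum occurs at an interior point $x_0$ and along the top eigendirection, which after rotation we normalize so that $h_{ij}(x_0)$ is diagonal and $\lambda_1=h_{11}$ is the largest principal curvature.

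At $x_0$ the first-order condition $\nabla_i W=0$ yields $h_{11;i}/\lambda_1 = -\phi'(\vartheta)\vartheta_i - A u_i$, and the second-order condition $LW\le 0$ yields, after commuting covariant derivatives on $h_{11}$ via the Codazzi equation and the Lorentz-Minkowski Gauss equation, an inequality of the schematic form
\begin{equation*}
0 \;\ge\; \frac{F^{ii} h_{11;ii}}{\lambda_1} \;-\; \frac{F^{ii}(h_{11;i})^2}{\lambda_1^{2}} \;+\; \phi'(\vartheta)\, F^{ii}\vartheta_{ii} \;+\; \phi''(\vartheta)\, F^{ii}\vartheta_i^{2} \;+\; A\, F^{ii} u_{ii}.
\end{equation*}
The term $F^{ii}h_{11;ii}$ is processed by differentiating the equation $\sigma_k[u]=\psi(x,u,\vartheta)$ twice in the $\partial_1$-direction: the concavity of $\sigma_k^{1/k}$ turns the $F^{ij,rs}h_{ij;1}h_{rs;1}$ remainder into a controlled negative term, the convexity of $\psi^{1/k}$ in $\vartheta$ controls the quadratic $\psi_{\vartheta\vartheta}(\vartheta_1)^2$ contribution with the right sign, and the structural hypothesis \eqref{f's condition} converts the pure $\psi_\vartheta\,\vartheta_{11}$ term into one that scales like $\psi$ itself, hence like $\lambda_1$ at a point where $\lambda_1$ is large. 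The formulas $\vartheta_i = h_i^{\;\ell}\langle X,X_\ell\rangle_L$ and $\vartheta_{ij}=h_{ij}+\vartheta\, h_i^{\;\ell}h_{\ell j}+h_{ij;\ell}\langle X,X^\ell\rangle_L$ in the Lorentzian setting, together with the graph relation \eqref{h_{ij}} that expresses $u_{ij}$ in terms of $h_{ij}$, $u\sigma_{ij}$ and $u_iu_j/u$, allow one to combine the remaining terms. Choosing $\phi''$ large and positive, and then $A$ large, produces a term of the form $A\,u\,\sum F^{ii}$ which dominates the bad quadratic gradient contribution via the first-order relation, and an overall coefficient proportional to $\lambda_1$ on the good side; this yields $\lambda_1(x_0)\le C(1+\sup_{\partial M^n}\|A\|)$.

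Because $\mathcal{G}$ is $k$-admissible with $k\ge 1$, one has $\sigma_1>0$ on $\Gamma_k$, so any bound $\lambda_1\le C$ on the largest principal curvature automatically controls each $|\lambda_i|$, and hence $\|A\|$, yielding \eqref{A's boundary}. The main obstacle I foresee is bookkeeping: the induced metric $g=u^2 g_{\mathscr{H}^n(1)}-dr^2$ makes the explicit formulas for $\vartheta_{ij}$, $u_{ij}$, and the commutator of third covariant derivatives of $h$ noticeably more elaborate than in the Euclidean graph case, and the Lorentzian sign convention flips several terms in the Gauss equation that are crucial to decide whether the borderline contributions are absorbed or amplified. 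The bound $|Du|/u\le\rho<1$ on the spacelike cone, which makes $v\ge\sqrt{1-\rho^2}>0$, will be used repeatedly to keep all coefficients bounded, and the assumption $\psi\in C^2$ together with $\varphi\in C^1$ is exactly what is needed to make the above computation consistent with the dependence stated for $C$.
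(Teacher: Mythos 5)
Your route is genuinely different from the paper's, and substantially more involved than the situation requires. For Theorem~\ref{main1.1} the paper simply takes $W=\sigma_1(A)$ as the test function. At an interior maximum $\nabla\sigma_1=0$, so plugging into the second identity of Lemma~\ref{F_{ij}'s equation 2} gives
\[
0\ge F^{ij}\nabla_i\nabla_j\sigma_1=-F^{ij,pq}\nabla_l h_{ij}\nabla_l h_{pq}+F^{ij}h_{im}h_{mj}\sigma_1-F^{ij}h_{ij}|A|^2+\Delta f,
\]
and then the argument closes in three lines: the concavity term $-F^{ij,pq}\nabla h\,\nabla h$ is nonnegative and can be dropped outright; convexity of $f$ in $\vartheta$ together with Lemma~\ref{F_{ij}'s equation 1} gives $\Delta f\ge f_\vartheta\,\vartheta|A|^2-c_1\sigma_1-c_2$, and \eqref{f's condition} makes $(f_\vartheta\vartheta-f)|A|^2\ge0$ cancel the $-F^{ij}h_{ij}|A|^2=-f|A|^2$ term; finally Newton's inequality gives $F^{ij}h_{im}h_{mj}\ge\frac1n f\sigma_1$, producing a quadratic inequality $0\ge\frac1n f\sigma_1^2-c_1\sigma_1-c_2$ that bounds $\sigma_1$. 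Crucially, \emph{no} third-order gradient term $(\nabla h_{11})^2$ ever appears, so no delicate absorption is needed.

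Your test function $\log\bigl(h(\xi,\xi)\bigr)+\phi(\vartheta)+Au(x)$ is instead the kind of object the paper uses for the \emph{interior} estimate (Theorem~\ref{main 1.2}, with $G=\eta^\alpha e^{\Psi(\vartheta)}h_{ij}\chi_i\chi_j$). With $\log\lambda_1$ you necessarily acquire the negative term $-F^{ii}(h_{11;i})^2/\lambda_1^2$ on top of the concavity remainder $-F^{ij,pq}\nabla_1 h_{ij}\nabla_1 h_{pq}/\lambda_1$, and the competition between these two is the hard part: in the paper's Theorem~\ref{main 1.2} it requires the index split $I=\{\mathcal{P}^{ii}\le4\mathcal{P}^{11}\}$, $J=\{\mathcal{P}^{jj}>4\mathcal{P}^{11}\}$ and a dedicated claim \eqref{case 2.8}, plus the dichotomy $h_{nn}\lessgtr-\zeta h_{11}$. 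Your sketch says the concavity "turns the remainder into a controlled negative term" but does not explain how the $(h_{11;i})^2$ contribution is absorbed when $\mathcal{P}^{ii}$ is much larger than $\mathcal{P}^{11}$; without that you do not actually reach a bound. So, as written, your argument has a gap exactly at the step the paper's choice of $W=\sigma_1$ was designed to avoid. One minor sign check as well: $\vartheta_i=-h_i^{\;\ell}\langle X,X_\ell\rangle_L$, not $+h_i^{\;\ell}\langle X,X_\ell\rangle_L$, by Weingarten and $\vartheta=-\langle X,\nu\rangle_L$.

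Your closing remark that a bound on $\lambda_1$ together with $\sigma_1>0$ bounds $\|A\|$ is correct and parallels the paper's implicit use of the same fact.
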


\begin{remark}
\rm{ It is not hard to find some $\psi$ satisfying assumptions in
Theorem \ref{main1.1}. For instance, (i)
$\psi(x,u,\vartheta)=\vartheta^{p}h(x,u)$ for $p\geq k$; (ii)
$\psi(x,u,\vartheta)=e^{p\vartheta}h(x,u)$ for $p\geq k$. }
\end{remark}

An interior curvature estimate can be obtained in the case that
$\varphi$ is affine and satisfies the strict version of (\ref{f's
condition}).

\begin{theorem}\label{main 1.2}
Suppose that $u\in C^{4}(M^{n})\cap C^{2}(\overline{M^{n}})$ is a
spacelike, $k$-admissible solution of the PCP (\ref{main
equations}), $0<\psi\in C^{\infty}(\overline{M^{n}})$ and that
$\psi^{\frac{1}{k}}(X,\vartheta)$ is convex in $\vartheta$ and
satisfies
\begin{equation}\label{f's strictly condition}
\frac{\partial \psi^{\frac{1}{k}}(X,\vartheta)}{\partial
\vartheta}\cdot \vartheta>\psi^{\frac{1}{k}}(X,\vartheta) \qquad
for~ fixed~ X\in\mathcal{G}.
\end{equation}
Furthermore, suppose that $M^{n} \subset \mathscr{H}^{n}(1)$ is
$C^{2}$ and uniformly convex, and that $\varphi$ is spacelike and
affine. If $u\in C^{4}(M^{n})$ is a spacelike, $k$-admissible
solution of the PCP (\ref{ME2}), then
\begin{equation*}
\sup\limits_{\widetilde{M^{n}}}|A|\leq C(\widetilde{M^{n}})
\end{equation*}
for any $\widetilde{M^{n}}\subset\subset M^{n}$, where
$C(\widetilde{M^{n}})$ depends only on $n$, $\zeta$, $M^{n}$,
$\mathrm{dist}(\widetilde{M^{n}}, \partial M^{n})$,
$||\varphi||_{C^{1}(\overline{M^{n}})}$ and
$||\psi||_{C^{2}\left(\overline{M^{n}}\times\left[\inf\limits_{\partial
M^{n}}u,\sup\limits_{\partial
M^{n}}u\right]\times\mathbb{R}\right)}$.
\end{theorem}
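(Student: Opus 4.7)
The plan is to establish an interior bound on the largest principal curvature $\lambda_{\max}$ via a maximum principle argument applied to a test function combining $\lambda_{\max}$ with a cutoff that vanishes on $\partial M^{n}$. The hypothesis that $\varphi$ is affine plays a crucial role: its natural affine extension $\tilde{\varphi}$ over $\mathscr{H}^{n}(1)$ has graph a spacelike hyperplane in $\mathbb{R}^{n+1}_{1}$, and therefore satisfies $\sigma_{k}[\tilde{\varphi}]\equiv 0$. Since $\sigma_{k}[u]=\psi>0$ and $u=\tilde{\varphi}$ on $\partial M^{n}$, the comparison principle for $\sigma_{k}$ on $\Gamma_{k}$ together with the $C^{2}$ uniform convexity of $M^{n}$ produces a definite sign for $u-\tilde{\varphi}$ with strict positivity (up to sign) in the interior. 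Hence $\eta:=|u-\tilde{\varphi}|$ is a cutoff that is positive in $M^{n}$, vanishes on $\partial M^{n}$, and is bounded below on any $\widetilde{M^{n}}\subset\subset M^{n}$ by a constant depending on $\mathrm{dist}(\widetilde{M^{n}},\partial M^{n})$, $M^{n}$, $\|\varphi\|_{C^{1}}$ and the spacelike constant $\zeta$.

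Consider the test function
$$W=\log\lambda_{\max}+A\log\eta+B\phi(\vartheta),$$
with constants $A,B>0$ and a monotone $\phi$ to be chosen. At an interior maximum $x_{0}$, pick a local frame diagonalizing $(h_{ij})$ with $h_{11}(x_{0})=\lambda_{\max}(x_{0})$, set $F:=\sigma_{k}^{1/k}$ and $F^{ij}:=\partial F/\partial h_{ij}$, and use $\nabla W(x_{0})=0$ together with $F^{ij}W_{ij}(x_{0})\le 0$. Expanding $F^{ij}(\log\lambda_{\max})_{ij}$ via the Gauss--Codazzi equations of $\mathcal{G}\subset\mathbb{R}^{n+1}_{1}$ and using that $\mathscr{H}^{n}(1)$ has constant sectional curvature $-1$, one obtains a good third-order term $\lambda_{\max}^{-2}\sum_{i}F^{ii}h_{11i}^{2}$ (absorbed via the critical-point equation), commutator errors controlled by $\sum_{i}F^{ii}$, and the contribution of differentiating the equation $\sigma_{k}=\psi(X,\vartheta)$ twice in the $e_{1}$-direction. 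The cutoff term contributes $-AF^{ii}\eta_{i}^{2}/\eta^{2}+AF^{ij}\eta_{ij}/\eta$, and since $\tilde{\varphi}$ has vanishing second fundamental form, (\ref{h_{ij}}) reduces the second piece at $x_{0}$ to $Ak\psi^{1/k}/\eta$ plus lower-order controlled terms.

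The decisive algebraic step is to extract from the strict convexity hypothesis (\ref{f's strictly condition}) the lower bound $\vartheta\,\psi_{\vartheta}^{1/k}-\psi^{1/k}\ge c_{0}>0$ on compact subsets; after the standard manipulation of the twice-differentiated equation, this produces a strictly positive multiple of $\lambda_{\max}\sum_{i}F^{ii}$. Choosing $B$ large enough to dominate cross terms of the form $BF^{ii}\vartheta_{i}^{2}$, invoking the concavity of $F$ on $\Gamma_{k}$ to control the sign-indefinite Codazzi terms of type $(\lambda_{\max}-\lambda_{i})^{-1}h_{11i}^{2}$, and absorbing residual errors by Cauchy--Schwarz, one arrives at an inequality of the form
$$c_{1}\lambda_{\max}(x_{0})\sum_{i}F^{ii}\le \frac{C(1+A)}{\eta(x_{0})^{2}}\sum_{i}F^{ii},$$
which gives $\eta(x_{0})^{2}\lambda_{\max}(x_{0})\le C$ at the maximum, and hence the desired interior bound on $\widetilde{M^{n}}$.

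The principal obstacle is twofold. First, the comparison $u\lessgtr\tilde{\varphi}$ must be quantitative enough that $\eta$ is genuinely non-degenerate on $\widetilde{M^{n}}$; in the Lorentzian setting with the spacelike constraint $|Du|/u<1$, this requires constructing suitable sub-barriers tangent to $\partial M^{n}$ at each boundary point, which is available from the $C^{2}$ uniform convexity of $M^{n}$. Second, and more delicate, the bookkeeping of the third-order Codazzi commutators becomes singular when $\lambda_{\max}$ has multiplicity at $x_{0}$; this classical obstruction is handled either by Ivochkina's eigenvalue perturbation or by replacing $\log\lambda_{\max}$ with $\beta^{-1}\log\sum_{i}e^{\beta h_{ii}}$ for large $\beta$ and passing to the limit. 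The Lorentzian curvature terms require a careful tracking of signs but contribute only linearly in $\lambda_{\max}$, so they are controlled by the positive $c_{1}\lambda_{\max}\sum_{i}F^{ii}$ term extracted from (\ref{f's strictly condition}).
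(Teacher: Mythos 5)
Your proposal captures the broad architecture that the paper actually uses: a cutoff $\eta=\varphi-u$ shown to be positive in $M^{n}$ by a barrier argument, a test function that is effectively $\alpha\log\eta+\Psi(\vartheta)+\log h_{11}$ (the paper writes $G=\eta^{\alpha}e^{\Psi(\vartheta)}h_{ij}\chi_{i}\chi_{j}$, which is the same after taking logarithms), the critical-point relation to absorb $\nabla h_{11}$, the use of Lemma \ref{F_{ij}'s equation 2} to differentiate the equation twice in the $e_{1}$-direction, convexity in $\vartheta$, and the strict inequality (\ref{f's strictly condition}) to extract a good $h_{11}$-term. So the skeleton matches.

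However, there is a genuine gap at the technical heart of the argument. After absorbing $-F^{ij}\nabla_{i}h_{11}\nabla_{j}h_{11}/h_{11}^{2}$ via the critical-point equation and Cauchy--Schwarz, and after choosing $\Psi$ so that $\Psi''-(1+\varepsilon)(\Psi')^{2}\le0$ while $\Psi'\vartheta+1>0$, one is left needing the quadratic term $F^{ij}h_{im}h_{jm}=\sum_{i}\mathcal{P}_{i}\lambda_{i}^{2}$ to dominate $\lambda_{1}^{2}\,\mathrm{tr}\,F^{ij}$ (up to constants), which is what drives the final bound $\eta h_{11}\le C$. This domination is \emph{false} in general: $\mathcal{P}_{1}$ is the smallest of the $\mathcal{P}_{i}$, and when all principal curvatures other than $\lambda_{1}$ are small in modulus, $\sum_{i}\mathcal{P}_{i}\lambda_{i}^{2}$ can be far below $\lambda_{1}^{2}\,\mathrm{tr}\,F^{ij}$. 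The paper handles this by the case dichotomy $h_{nn}\le-\zeta h_{11}$ versus $h_{nn}\ge-\zeta h_{11}$: in the first case one gets $F^{ij}h_{im}h_{jm}\ge\frac{\zeta^{2}}{n}h_{11}^{2}\,\mathrm{tr}\,F^{ij}$ directly; in the second one splits the index set into $I=\{\mathcal{P}^{ii}\le4\mathcal{P}^{11}\}$ and $J=\{\mathcal{P}^{jj}>4\mathcal{P}^{11}\}$, redistributes the bad third-order contributions using the critical-point identity differently on $I$ and on $J$, and establishes the key inequality $-\frac{1}{h_{11}}F^{ij,pq}\nabla_{1}h_{ij}\nabla_{1}h_{pq}\ge\bigl[1+c_{17}\alpha^{-1}\bigr]\sum_{j\in J}\mathcal{P}_{j}|\nabla_{j}h_{11}|^{2}/h_{11}^{2}$ using both concavity \emph{and} the sign information from $h_{nn}\ge-\zeta h_{11}$. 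Your statement that concavity of $F$ alone controls ``the sign-indefinite Codazzi terms of type $(\lambda_{\max}-\lambda_{i})^{-1}h_{11i}^{2}$'' skips exactly this step; concavity gives the inequality $F^{ij,pq}\nabla_{1}h_{ij}\nabla_{1}h_{pq}\le0$, but you additionally need it to swallow $\sum_{j\in J}\mathcal{P}_{j}|\nabla_{j}h_{11}|^{2}/h_{11}^{2}$, and that requires the Sheng--Urbas--Wang style case analysis. This is also what makes $\zeta$ appear in the final constant (the paper fixes $\zeta=1/5$ at the end of Case 2); $\zeta$ is not the spacelike constant $\rho$, as you suggest.

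Two smaller points. First, the paper never faces the eigenvalue-multiplicity singularity you worry about, because it maximizes $h_{ij}\chi_{i}\chi_{j}$ over directions $\chi$ rather than $\lambda_{\max}$ itself, so neither Ivochkina's perturbation nor the exponential averaging is needed. Second, your comparison argument to prove $\eta>0$ relies on $\sigma_{k}[\tilde{\varphi}]\equiv0$ with $\tilde{\varphi}$ on the boundary of $\Gamma_{k}$, which requires care in applying the comparison principle (ellipticity degenerates); the paper instead uses a hyperbolic-plane barrier $\omega$ with graph $\mathscr{H}^{n}(R)$, which stays strictly inside the admissible cone and makes the comparison clean. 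Also, the good ``strictly positive multiple of $\lambda_{\max}\sum_{i}F^{ii}$'' you hope to extract from (\ref{f's strictly condition}) is in fact only linear in $h_{11}$; the term that actually closes the estimate is quadratic, of the form $h_{11}^{2}\,\mathrm{tr}\,F^{ij}$ or $\mathcal{P}_{1}h_{11}^{2}$, which is precisely what the missing case division supplies.
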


\begin{remark}
\rm{ (1) The positive constant $\zeta$ here will be determined
clearly in
the proof of Theorem \ref{main 1.2} in Subsection \ref{SU3-2}. \\
(2) Here, $\mathrm{dist}(\widetilde{M^{n}}, \partial M^{n})$
characterizes the Riemannian distance between $\widetilde{M^{n}}$
and $\partial M^{n}$, and of course, depends on the induced metric
$\{\sigma_{ij}\}_{i,j=1,2,\ldots,n}$ on $\mathscr{H}^{n}(1)$. }
\end{remark}

Combining the above curvature estimates and the $C^{2}$ boundary
estimates shown in \cite[Section 6]{ggm}, together with the method
of continuity, we can get the existence and uniqueness of solutions
to the PCP (\ref{main equations}) with $k=2$ as follows:

\begin{theorem}\label{main 1.3}
Suppose that $M^{n}$ is a smooth bounded domain of
$\mathscr{H}^{n}(1)$ and is strictly convex, while $\psi$ is a
smooth positive function and $\psi^{\frac{1}{2}}$ is convex in
$\vartheta$ satisfying
\begin{equation*}\label{f's condition 2}
\frac{\partial \psi^{\frac{1}{2}}(x, u, \vartheta)}{\partial
\vartheta}\cdot \vartheta\geq\psi^{\frac{1}{2}}(x, u, \vartheta)
\qquad for~~fixed~~(x, u)\in M^{n}\times\mathbb{R}.
\end{equation*}
Then for any spacelike, affine function $\varphi$, there exists a
uniquely smooth spacelike, $2$-admissible graphic hypersurface
$\mathcal{G}$ (defined over $M^{n}$) with the prescribed curvature
$\psi$ and Dirichlet boundary data $\varphi$.
\end{theorem}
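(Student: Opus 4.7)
The plan is to apply the method of continuity. Since Theorem \ref{main1.1} already reduces the global $C^{2}$ estimate for a $2$-admissible solution to a bound for $||A||$ on $\partial M^{n}$, and the boundary $C^{2}$ estimate is supplied by \cite[Section 6]{ggm}, the Hessian-type bounds are in place; the structural hypothesis on $\psi^{1/2}$ in Theorem \ref{main 1.3} is exactly the one required by Theorem \ref{main1.1}. What remains is to choose a deformation family, establish uniform $C^{0}$ and $C^{1}$ estimates along it, and execute the standard openness/closedness dichotomy.

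Concretely, I would first construct an initial $2$-admissible spacelike graph $\underline{u}\in C^{\infty}(\overline{M^{n}})$ with $\underline{u}|_{\partial M^{n}}=\varphi$. The strict convexity of $M^{n}\subset\mathscr{H}^{n}(1)$ together with the spacelike affine nature of $\varphi$ allows a construction of the form $\underline{u}=\varphi+\tau\eta$ for small $\tau>0$ and a nonpositive auxiliary function $\eta$ vanishing on $\partial M^{n}$ with sufficiently large (hyperbolic) Hessian, so that $(\lambda_{i}[\underline{u}])\in\Gamma_{2}$ near the boundary and, after interpolation, on all of $M^{n}$. Setting $\psi_{0}:=\sigma_{2}[\underline{u}]$, consider the family
\begin{equation*}
\sigma_{2}[u^{t}]=(1-t)\psi_{0}+t\,\psi(x,u^{t},\vartheta^{t}),\qquad u^{t}|_{\partial M^{n}}=\varphi,\qquad t\in[0,1],
\end{equation*}
with $u^{0}=\underline{u}$; the interpolated right-hand side is positive, smooth, convex in $\vartheta$ and satisfies the structural inequality needed for Theorem \ref{main1.1}.

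Along this path, the $C^{0}$ bound comes from a two-sided barrier argument using the affine extension of $\varphi$ and the positivity of $\psi$. The $C^{1}$ bound amounts to preserving a uniform spacelike gap $|Du^{t}|/u^{t}\leq\rho<1$: on $\partial M^{n}$ this is forced by sandwiching $u^{t}$ between the affine barrier and $\underline{u}$, and in the interior it follows from a maximum-principle argument applied to the standard auxiliary function $\log v^{-1}+\mu u$ on $\mathscr{H}^{n}(1)$. Given uniform $C^{0}$ and $C^{1}$ bounds, the boundary $C^{2}$ estimate of \cite[Section 6]{ggm} applies and then Theorem \ref{main1.1} upgrades it to a uniform global $C^{2}$ estimate; Evans--Krylov then furnishes a uniform $C^{2,\alpha}$ estimate, and Schauder bootstrap gives $C^{4,\alpha}$. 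Thus the parameter set $I\subseteq[0,1]$ admitting a $2$-admissible solution is closed. Openness is standard: at a $2$-admissible $u^{t}$, the linearization of $\sigma_{2}[\cdot]$ is uniformly elliptic by the Newton--Maclaurin inequalities on $\Gamma_{2}$, while condition (\ref{f's condition}) with $k=2$ gives the correct sign for the $\vartheta$-derivative, so the linearized operator is invertible on the subspace of $C^{2,\alpha}$ functions vanishing on $\partial M^{n}$ and the implicit function theorem applies. Since $0\in I$, we obtain $1\in I$. Uniqueness follows from the comparison principle: the difference $w=u_{1}-u_{2}$ of two $2$-admissible solutions satisfies a linear elliptic equation with zero Dirichlet data whose zero-order coefficient has the sign dictated by (\ref{f's condition}), forcing $w\equiv 0$.

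The main obstacle is the uniform propagation of the spacelike gap along the continuity path. Both the boundary $C^{2}$ estimate of \cite[Section 6]{ggm} and Theorem \ref{main1.1} implicitly depend on non-degeneracy through $v=\sqrt{1-u^{-2}|Du|^{2}}$, so one must prevent $v$ from collapsing for any $t\in[0,1]$. The strict convexity of $M^{n}$ and the affineness of $\varphi$ are tailored precisely to secure this via the barrier $\underline{u}$; accordingly, the technical heart of the proof is the explicit construction of a $2$-admissible subsolution and the verification that the spacelike gap survives the deformation, rather than the routine machinery of continuity.
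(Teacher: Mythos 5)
Your choice of continuity path is genuinely different from the paper's. The paper interpolates the \emph{operator} rather than the right-hand side, considering
\begin{equation*}
t\,\sigma_{k}(u,Du,D^{2}u)+(1-t)\,\Delta u=\psi\bigl(x,u,\vartheta(u,Du)\bigr),\qquad u|_{\partial M^{n}}=\varphi,
\end{equation*}
so that the base point $t=0$ is the Poisson equation (solvable by linear Schauder theory, with no subsolution construction required) and, crucially, the right-hand side is $\psi$ for every $t$, so the structural hypothesis of Theorem~\ref{main1.1} is preserved along the whole path. You keep the operator $\sigma_{2}$ fixed and instead deform the source term, $\sigma_{2}[u^{t}]=(1-t)\psi_{0}+t\,\psi$ with $\psi_{0}=\sigma_{2}[\underline{u}]$, which forces you to build an explicit $2$-admissible subsolution $\underline{u}$ at the start. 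Both are recognized continuity schemes, and your version correctly identifies the need for uniform propagation of the spacelike gap; on that point the two proofs converge to the same $C^{0}$--$C^{1}$ machinery of Section~\ref{c1es}.

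There is, however, a gap in your closedness step. You assert that the interpolated right-hand side $\psi_{t}=(1-t)\psi_{0}+t\psi$ ``satisfies the structural inequality needed for Theorem~\ref{main1.1}.'' This is false. Since $\psi_{0}=\sigma_{2}[\underline{u}]$ is independent of $\vartheta$, one has $\partial_{\vartheta}\psi_{t}=t\,\partial_{\vartheta}\psi$, and the required condition $\frac{\partial\psi_{t}^{1/2}}{\partial\vartheta}\cdot\vartheta\geq\psi_{t}^{1/2}$ is equivalent to $\frac{t}{2}\,\partial_{\vartheta}\psi\cdot\vartheta\geq(1-t)\psi_{0}+t\psi$. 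The hypothesis on $\psi$ only gives $\frac{t}{2}\,\partial_{\vartheta}\psi\cdot\vartheta\geq t\psi$, so what remains is $(1-t)\psi_{0}\leq 0$, which fails for every $t<1$ because $\psi_{0}>0$ on a $2$-admissible $\underline{u}$. Since the passage from~(\ref{3.4}) to~(\ref{3.5}) in the proof of Theorem~\ref{main1.1} uses~(\ref{f's condition}) to cancel the $|A|^{2}$ term, the curvature estimate is not available along your path, and closedness does not follow. A standard repair is to put the $\vartheta$-dependence into the base datum as well, e.g.\ replace $\psi_{0}$ by $\vartheta^{p}\tilde\psi_{0}(x)$ with $p\geq 2$ chosen so that $\psi_{t}^{1/2}$ is convex and satisfies~(\ref{f's condition}) uniformly in $t$; alternatively, adopt the paper's path, where the right-hand side is left untouched and the issue never arises.
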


\begin{remark} \label{remark1.5}
\rm{ (1) In the PCP (\ref{main equations}), if
$\sigma_{k}=\sigma_{k}(\lambda(A))$ was replaced
by\footnote{~Clearly, in (1) of Remark \ref{remark1.5} here,
$\sigma_{k}(\lambda(\cdot))$ denotes the $k$-th elementary symmetric
function of eigenvalues of a given tensor -- the second fundamental
form $A$.}
$$\frac{\sigma_{k}(\lambda(A))}{\sigma_{l}(\lambda(A))}$$
 with
$2\leq k\leq n$, $0\leq l\leq k-2$,  then the a priori estimates for
solutions to the corresponding
 Dirichlet problem of a class of Hessian quotient equations can be
 obtained under suitable assumptions, which leads to the existence and
uniqueness of solutions for some $k$ -- see \cite{ggm} for
 details.\\
  (2) Clearly, if $l=0$, then the $(k,l)$-Hessian quotient
  $\frac{\sigma_{k}(\lambda(A))}{\sigma_{l}(\lambda(A))}$ becomes
  $\sigma_{k}(\lambda(A))$, which implies that the PCP considered in
  \cite{ggm} covers (\ref{main equations}) as a special case. This
  leads to the fact that the a priori estimates obtained therein, which of
  course is much complicated than the one shown in this paper, can
  be used directly in the usage of Schauder theory in the proof of
  existence of solutions to the PCP (\ref{main equations}) shown in Section
  \ref{se5}. For the purpose of simplification, the $C^{2}$ boundary estimates of the PCP (\ref{main
  equations}) will not be given here, and
  readers can check a more general and more complicated version
  given in \cite[Section 6]{ggm}.
  \\
 (3) We have already shown that it is reasonable and feasible to consider
 PCPs (with DBC) on bounded domains in
 $\mathscr{H}^{n}(1)\subset\mathbb{R}^{n+1}_{1}$ through Theorem \ref{main 1.3} here and
 \cite{ggm}. Based on this fact, one can try to extend the existing
 results on the PCPs to this setting. We prefer to leave this
 attempt to readers who are interested in this topic and we believe
 that
 our work here and \cite{ggm} would give some guidance.

}
\end{remark}

The paper is organized as follows. Some useful formulae for
spacelike graphic hypersurfaces defined over
$M^{n}\subset\mathscr{H}^{n}(1)$ will be introduced in Section
\ref{S2}. Parts of these formulae were shown by us firstly in
\cite{GaoY2} and were also mentioned in some works later (see, e.g.,
\cite{gm2}-\cite{ggm}). In Section \ref{c1es}, we will give the
$C^{1}$ estimate for the PCP (\ref{main equations}). Curvature
estimates in Theorems \ref{main1.1} and \ref{main 1.2} will be
proven in Section \ref{S3}. The proof of Theorem \ref{main 1.3} will
be shown in the last section.

\section{Some Elementary Formulas} \label{S2}
As shown in \cite[Section 2]{GaoY}, we have the following fact:

\textbf{FACT}. Given an $(n+1)$-dimensional Lorentz manifold
$(\overline{N}^{n+1},\overline{g})$, with the metric $\overline{g}$,
and its spacelike hypersurface $N^{n}$. For any $p\in N^{n}$, one
can choose a local  Lorentzian orthonormal frame field
$\{e_{0},e_{1},e_{2},\ldots,e_{n}\}$ around $p$ such that,
restricted to $N^{n}$, $e_{1},e_{2},\ldots,e_{n}$ form orthonormal
frames tangent to $N^{n}$. Taking the dual coframe fields
$\{z_{0},z_{1},z_{2},\ldots,z_{n}\}$ such that the Lorentzian metric
$\overline{g}$ can be written as
$\overline{g}=-z_{0}^{2}+\sum_{i=1}^{n}z_{i}^{2}$. Making the
convention on the range of indices
\begin{eqnarray*}
0\leq I,J,K,\ldots\leq n; \qquad\qquad 1\leq i,j,k\ldots\leq n,
\end{eqnarray*}
and doing differentials to forms $z_{I}$, one can easily get the
following structure equations
\begin{eqnarray}
&&(\mathrm{Gauss~ equation})\qquad \qquad R_{ijkl}=\overline{R}_{ijkl}-(h_{ik}h_{jl}-h_{il}h_{jk}), \label{Gauss}\\
&&(\mathrm{Codazzi~ equation})\qquad \qquad h_{ij,k}-h_{ik,j}=\overline{R}_{0ijk},  \label{Codazzi}\\
&&(\mathrm{Ricci~ identity})\qquad \qquad
h_{ij,kl}-h_{ij,lk}=\sum\limits_{m=1}^{n}h_{mj}R_{mikl}+\sum\limits_{m=1}^{n}h_{im}R_{mjkl},
\label{Ricci}
\end{eqnarray}
where $R$ and $\overline{R}$ are the curvature tensors of $N^{n}$
and $\overline{N}^{n+1}$ respectively. Clearly, in our setting here,
all formulae mentioned above can be used directly with
$\overline{N}^{n+1}=\mathbb{R}^{n+1}_{1}$ and
$\overline{g}=\langle\cdot,\cdot\rangle_{L}$.

For the spacelike graphic hypersurface
$\mathcal{G}\subset\mathbb{R}^{n+1}_{1}$ given by (\ref{G1}) and
$X=(x,u(x))\in\mathcal{G}$, set $X_{,ij}:=\partial_i
\partial_j X-\Gamma_{ij}^{k}X_k$ with $\Gamma_{ij}^{k}$ the
Christoffel symbols of the metric on $\mathcal{G}$. Then it is easy
to know
\begin{eqnarray*}
h_{ij}=-\left\langle X_{,ij}, \nu\right\rangle_{L},
 \end{eqnarray*}
 and have the following identities
\begin{eqnarray}
&&(\mathrm{Gauss~formula}) \qquad \qquad \qquad X_{,ij}=h_{ij}\nu,
\label{Gauss
for} \\
&& (\mathrm{Weingarten~formula}) \qquad \qquad \nu_{,i}=h_{ij}X^{j}.
\label{Wein for}
\end{eqnarray}
Using (\ref{Gauss}), (\ref{Codazzi}) and (\ref{Ricci}) with the fact
$\overline{R}=0$ in our setting, we have
\begin{equation}\label{Gauss-1}
R_{ijkl}=h_{il}h_{jk}-h_{ik}h_{jl},
\end{equation}
\begin{equation}\label{Codazzi-1}
\nabla_{k}h_{ij}=\nabla_{j}h_{ik}, \qquad (i.e.,~h_{ij,k}=h_{ik,j})
\end{equation}
and
\begin{eqnarray}\label{Laplace}
\Delta h_{ij}=(\sigma_{1})_{,ij}-\sigma_{1}
h_{ik}h^{k}_{j}+h_{ij}|A|^{2},
\end{eqnarray}
 where as usual $\nabla$, $\Delta$ denote the gradient and the
 Laplace operators on $\mathcal{G}$, respectively.
Here the comma ``," in subscript of a given tensor means doing
covariant derivatives. Besides, we make an agreement that, for
simplicity, in the sequel the comma ``," in subscripts will be
omitted unless necessary.

\begin{remark}
\rm{ Similar to the Riemannian case, the derivation of the formula
(\ref{Laplace}) depends on equations (\ref{Gauss-1}) and
(\ref{Codazzi-1}).}
\end{remark}

We also need the following fact:

\begin{lemma}\label{sg's formula}
Let $\lambda=(\lambda_{1}, \lambda_{2}, \cdots,
\lambda_{n})\in\mathbb{R}^{n}$ and $k=0, 1, 2, \cdots, n$. Denote by
$\sigma_{k}(\lambda)$ defined as (\ref{kwc}) the $k$-th elementary
symmetric function of $\lambda_{1}, \lambda_{2}, \ldots,
\lambda_{n}$. Also set $\sigma_{0}=1$. Denote by
$\sigma_{k}(\lambda|i)$ the symmetric function with $\lambda_{i}=0$.
Then for any $1\leq i\leq n$, one has
\begin{eqnarray*}\label{sg 1}
\sigma_{k+1}(\lambda)=\sigma_{k+1}(\lambda|i)+\lambda_{i}\sigma_{k}(\lambda|i),
\end{eqnarray*}
\begin{eqnarray*}\label{sg 2}
\sum_{i=1}^{n}\lambda_{i}\sigma_{k}(\lambda|i)=(k+1)\sigma_{k+1},
\end{eqnarray*}
\begin{eqnarray*}\label{sg 3}
\sum_{i=1}^{n}\sigma_{k}(\lambda|i)=(n-k)\sigma_{k}(\lambda),
\end{eqnarray*}
\begin{eqnarray*}\label{sg 4}
\frac{\partial\sigma_{k+1}(\lambda)}{\partial\lambda_{i}}=\sigma_{k}(\lambda|i),
\end{eqnarray*}
and
\begin{eqnarray*}\label{sg 5}
\sum_{i=1}^{n}\lambda_{i}^{2}\sigma_{k}(\lambda|i)=\sigma_{1}(\lambda)\sigma_{k+1}(\lambda)-(k+2)\sigma_{k+2}(\lambda).
\end{eqnarray*}
\end{lemma}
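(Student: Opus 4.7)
The plan is to establish the five identities in order, using only the definition
\[
\sigma_k(\lambda)=\sum_{1\le i_1<\cdots<i_k\le n}\lambda_{i_1}\cdots\lambda_{i_k}
\]
and a splitting of the indexing set according to whether a distinguished index $i$ appears. Throughout, the notation $\sigma_k(\lambda|i)$ should be interpreted as the $k$-th elementary symmetric function of the $n-1$ variables obtained by deleting $\lambda_i$, equivalently, the sum in the definition of $\sigma_k(\lambda)$ restricted to tuples not containing $i$.

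For the first identity, I would partition the increasing $(k+1)$-tuples $(i_1,\ldots,i_{k+1})\subset\{1,\ldots,n\}$ into those containing the index $i$ and those not containing it. The tuples not containing $i$ sum to $\sigma_{k+1}(\lambda|i)$; each tuple containing $i$ contributes $\lambda_i$ times a $k$-product from the remaining indices, summing to $\lambda_i\sigma_k(\lambda|i)$. The fourth identity then follows by direct differentiation of the definition: every monomial of $\sigma_{k+1}$ either contains the factor $\lambda_i$ once (and contributes its complementary $k$-product) or does not contain it (and contributes zero).

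For identity two, I would substitute identity four and compute $\sum_i\lambda_i\,\partial_{\lambda_i}\sigma_{k+1}(\lambda)$, which by Euler's formula for homogeneous polynomials equals $(k+1)\sigma_{k+1}$; alternatively, I can count that each $(k+1)$-tuple appearing in $\sigma_{k+1}$ is produced exactly $k+1$ times on the left (once for each of its indices). Identity three is a similar counting argument: each increasing $k$-tuple is missed by precisely the $n-k$ indices outside it, so it appears in $n-k$ of the sums $\sigma_k(\lambda|i)$.

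The only mildly tricky one is identity five. Here I would apply identity one twice: once to write $\lambda_i\sigma_{k+1}(\lambda|i)=\sigma_{k+2}(\lambda)-\sigma_{k+2}(\lambda|i)$, and once to write $\sigma_{k+1}(\lambda|i)=\sigma_{k+1}(\lambda)-\lambda_i\sigma_k(\lambda|i)$. Multiplying the second by $\lambda_i$ and combining with the first yields
\[
\lambda_i^{2}\sigma_k(\lambda|i)=\lambda_i\sigma_{k+1}(\lambda)-\sigma_{k+2}(\lambda)+\sigma_{k+2}(\lambda|i).
\]
Summing over $i$ and applying identity three (with $k$ replaced by $k+2$) to the last term gives
\[
\sum_{i=1}^{n}\lambda_i^{2}\sigma_k(\lambda|i)=\sigma_1\sigma_{k+1}-n\sigma_{k+2}+(n-k-2)\sigma_{k+2}=\sigma_1\sigma_{k+1}-(k+2)\sigma_{k+2},
\]
as required. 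No step is a genuine obstacle; the mild bookkeeping in identity five is the only place where one has to be careful about signs and the double use of the splitting formula.
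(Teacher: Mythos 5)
Your proof is correct and complete, and it takes precisely the route the paper alludes to: the paper simply states that these identities follow by ``direct calculations'' and omits them, and your arguments (partitioning index sets by whether a distinguished index appears, Euler's formula for homogeneous polynomials, the double application of the first identity for the fifth) are exactly the direct calculations in question. The bookkeeping in your derivation of identity five checks out, including the use of identity three with $k$ replaced by $k+2$ to get $(n-k-2)\sigma_{k+2}$ and hence the final coefficient $-(k+2)$.
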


\begin{proof}
The above properties of $\sigma_{k}$ can be obtained by direct
calculations, which we prefer to omit here.
\end{proof}

For any equation
\begin{equation}\label{F's equation}
F(A)=f(\lambda_{1},\lambda_{2},\cdots,\lambda_{n}),
\end{equation}
where $A$ is the second fundamental form of the spacelike graphic
hypersurface $\mathcal{G}\subset\mathbb{R}^{n+1}_{1}$ with
$\lambda_{1},\lambda_{2},\cdots,\lambda_{n}$ its principal
curvatures. We can prove the following two conclusions:

\begin{lemma}\label{F_{ij}'s equation 1}
For the function $F$ defined by (\ref{F's equation}) and the
quantity $\vartheta$ given in the PCP (\ref{main equations}), one
has
\begin{eqnarray*}\label{$F_{ij}$ 1}
&F^{ij}\nabla_{i}\nabla_{j}\nu=\nu
F^{ij}h_{j}^{m}h_{im}+F^{ij}\nabla_{i}h_{j}^{m}X_{m},
\end{eqnarray*}
\begin{eqnarray*}\label{w's}
&\Delta \vartheta=\sigma_{1}+\nabla^{i}\sigma_{1}\langle
X,X_{i}\rangle_{L}+|A|^{2}\vartheta.
\end{eqnarray*}
\end{lemma}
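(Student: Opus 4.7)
The plan is to prove both identities by straightforward computation, starting from the Gauss formula $X_{,ij}=h_{ij}\nu$ and the Weingarten formula $\nu_{,i}=h_{ij}X^{j}=h_{i}^{j}X_{j}$ supplied by (\ref{Gauss for})--(\ref{Wein for}), together with the Codazzi identity (\ref{Codazzi-1}). No curvature tensor of the ambient space is needed, since $\overline{R}=0$ in $\mathbb{R}^{n+1}_{1}$.

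For the first identity, the plan is to apply $\nabla_{i}$ directly to Weingarten's formula. Since $\nabla_{j}\nu=h_{j}^{m}X_{m}$, differentiating and substituting the Gauss formula $\nabla_{i}X_{m}=X_{,im}=h_{im}\nu$ gives
\begin{equation*}
\nabla_{i}\nabla_{j}\nu=(\nabla_{i}h_{j}^{m})X_{m}+h_{j}^{m}h_{im}\nu .
\end{equation*}
Contracting with $F^{ij}$ yields the claimed formula; no further identities are required.

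For the second identity, I would first compute $\nabla_{i}\vartheta$ directly: since $\vartheta=-\langle X,\nu\rangle_{L}$ is a scalar, $\nabla_{i}\vartheta=-\langle X_{i},\nu\rangle_{L}-\langle X,\nu_{,i}\rangle_{L}$, and because $X_{i}\perp\nu$ and $\nu_{,i}=h_{i}^{k}X_{k}$, this reduces to $\nabla_{i}\vartheta=-h_{i}^{k}\langle X,X_{k}\rangle_{L}$. Differentiating once more and using the Gauss formula to evaluate
\begin{equation*}
\nabla_{j}\langle X,X_{k}\rangle_{L}=g_{jk}+h_{jk}\langle X,\nu\rangle_{L}
\end{equation*}
produces an expression with three types of terms: one involving $\nabla_{j}h_{i}^{k}$, one proportional to $h_{ij}$, and one proportional to $h_{i}^{k}h_{jk}\vartheta$. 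Tracing with $g^{ij}$ then gives the Laplacian, with $g^{ij}h_{ij}=\sigma_{1}$ and $g^{ij}h_{i}^{k}h_{jk}=|A|^{2}$ arising automatically.

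The only non-cosmetic point is handling the derivative term $g^{ij}\nabla_{j}h_{i}^{k}$. The plan here is to apply the Codazzi identity $\nabla_{j}h_{il}=\nabla_{l}h_{ij}$ from (\ref{Codazzi-1}) to swap indices and obtain the contracted Codazzi relation $g^{ij}\nabla_{j}h_{i}^{k}=\nabla^{k}\sigma_{1}$. Combining everything then yields the claimed Bochner-type formula for $\Delta\vartheta$. The main potential obstacle is just careful sign tracking in Lorentzian signature: $\langle\nu,\nu\rangle_{L}=-1$ enters through the Gauss formula, and one has to verify by testing against $\mathscr{H}^{n}(1)$ itself (where $\vartheta\equiv 1$, $\sigma_{1}=n$, $|A|^{2}=n$, and $\Delta\vartheta=0$) that the signs are internally consistent before contracting.
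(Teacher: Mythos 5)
Your approach for both identities is the same as the paper's: differentiate the Weingarten formula once more and contract with $F^{ij}$ for the first, and for the second compute $\nabla\vartheta$ from Weingarten, differentiate again using the Gauss formula, trace, and convert $g^{ij}\nabla_{j}h_{i}^{k}$ to $\nabla^{k}\sigma_{1}$ via Codazzi. The first identity goes through exactly as you sketch. For the second, however, you assert the computation ``yields the claimed Bochner-type formula'' without actually carrying out the final contraction, and it does not. Writing out your own steps: $\nabla_{i}\vartheta=-h_{i}^{k}\langle X,X_{k}\rangle_{L}$, then
\begin{equation*}
\nabla_{j}\nabla_{i}\vartheta=-(\nabla_{j}h_{i}^{k})\langle X,X_{k}\rangle_{L}-h_{i}^{k}\bigl(g_{jk}+h_{jk}\langle X,\nu\rangle_{L}\bigr)=-(\nabla_{j}h_{i}^{k})\langle X,X_{k}\rangle_{L}-h_{ij}+h_{i}^{k}h_{jk}\vartheta,
\end{equation*}
where you used $\langle X,\nu\rangle_{L}=-\vartheta$. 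Tracing with $g^{ij}$ and applying Codazzi gives
\begin{equation*}
\Delta\vartheta=-\nabla^{k}\sigma_{1}\langle X,X_{k}\rangle_{L}-\sigma_{1}+|A|^{2}\vartheta,
\end{equation*}
which disagrees with the stated lemma in the sign of the first two terms. The sanity check you propose but do not run would have caught this immediately: on $\mathscr{H}^{n}(1)$ one has $\vartheta\equiv1$, $\sigma_{1}=n$, $|A|^{2}=n$, and $\Delta\vartheta=0$; your corrected formula gives $0-n+n=0$, while the lemma's stated formula gives $n+0+n=2n\neq0$. So the gap is that you stop short of the contraction and the test, and as a result you endorse a formula your own computation contradicts. (For what it is worth, the paper's displayed proof of this lemma starts from $\Delta\vartheta=g^{mn}\nabla_{m}\nabla_{n}\langle X,\nu\rangle_{L}$, dropping the minus from $\vartheta=-\langle X,\nu\rangle_{L}$, and then silently flips the sign of the $|A|^{2}$ term, so the two slips cancel on that term but not on $\sigma_{1}$ and $\nabla^{i}\sigma_{1}\langle X,X_{i}\rangle_{L}$; a careful write-up of your own plan, finished and tested, would have produced the corrected identity rather than reproduced the paper's.)
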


\begin{proof}
By the Weingarten formula (\ref{Wein for}), it follows that
$$\nabla_{i}\nabla_{j}\nu=\nabla_{i}\left(h_{j}^{m}X_{m}\right)=\nabla_{i}h_{j}^{m}X_{m}+h_{j}^{m}h_{im}\nu.$$
The second assertion in Lemma \ref{F_{ij}'s equation 1} can be
obtained as follows
\begin{equation*}
\begin{aligned}
\Delta\vartheta&=g^{mn}\nabla_{m}\nabla_{n}\langle X, \nu\rangle_{L}\\
&=
g^{mn}\nabla_{m}\left(h_{n}^{i}\langle X, X_{i}\rangle_{L}\right)\\
&= \nabla^{i}\sigma_{1}\langle X,
X_{i}\rangle_{L}+\sigma_{1}+|A|^{2}\vartheta.
\end{aligned}
\end{equation*}
by using the Gauss formula (\ref{Gauss for}) and also (\ref{Wein
for}).
\end{proof}

\begin{lemma} \label{F_{ij}'s equation 2}
For the function $F$ defined by (\ref{F's equation}), we have
\begin{equation*}\label{$F_{ij}$ 2}
F^{ij}\nabla_{i}\nabla_{j}\sigma_{1}=-F^{ij,pq}\nabla^{k}h_{ij}\nabla_{k}h_{pq}+F^{ij}h_{j}^{m}h_{im}\sigma_{1}-F^{ij}h_{ij}|A|^{2}+\Delta{f}
 \end{equation*}
 and
\begin{equation*}\label{$F_{ij}$ 3}
F^{ij}\nabla_{i}\nabla_{j}h_{mn}=-F^{ij,pq}\nabla_{n}h_{ij}\nabla_{m}h_{pq}+F^{ij}h_{j}^{l}h_{il}h_{mn}-F^{ij}h_{m}^{l}h_{ln}h_{ij}+\nabla_{m}\nabla_{n}{f}.
 \end{equation*}
 \end{lemma}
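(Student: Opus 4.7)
My plan is to derive both identities by differentiating the equation $F(A)=f$ and then using the Codazzi equation, Ricci identity, and Gauss equation established earlier to commute covariant derivatives. I will tackle the second (Hessian-of-$h_{mn}$) identity first, since the $\sigma_{1}$ identity follows from it by tracing with $g^{mn}$, using $g^{mn}h_{m}^{l}h_{ln}=|A|^{2}$ and $g^{mn}\nabla_{m}\nabla_{n}f=\Delta f$; alternatively the first identity can be derived independently from the Simons-type formula (\ref{Laplace}).

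For the second identity, differentiate $F(A)=f$ once to get $F^{ij}\nabla_{k}h_{ij}=\nabla_{k}f$, and once more in direction $\partial_{m}$ to obtain
\[
\nabla_{m}\nabla_{n}f=F^{ij,pq}\nabla_{n}h_{pq}\nabla_{m}h_{ij}+F^{ij}\nabla_{m}\nabla_{n}h_{ij}.
\]
The crux is to convert $F^{ij}\nabla_{m}\nabla_{n}h_{ij}$ into $F^{ij}\nabla_{i}\nabla_{j}h_{mn}$ modulo quadratic curvature terms. I will do this by the standard Codazzi--Ricci--Codazzi--Ricci chain: first apply the Codazzi identity (\ref{Codazzi-1}) on the inner derivative to write $\nabla_{n}h_{ij}=\nabla_{j}h_{in}$, so $\nabla_{m}\nabla_{n}h_{ij}=\nabla_{m}\nabla_{j}h_{in}$; then commute $\nabla_{m}$ and $\nabla_{j}$ via the Ricci identity (\ref{Ricci}), producing terms of the form $h\cdot R$; then use Codazzi again on the inner $\nabla_{m}h_{in}=\nabla_{i}h_{mn}$; and finally commute $\nabla_{j}$ and $\nabla_{i}$ via Ricci a second time. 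The outcome is
\[
\nabla_{m}\nabla_{n}h_{ij}=\nabla_{i}\nabla_{j}h_{mn}+(\text{four }h\cdot R\text{ terms}).
\]

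Next, I substitute $R_{abcd}=h_{ad}h_{bc}-h_{ac}h_{bd}$ from the Gauss equation (\ref{Gauss-1}) into each of the four curvature terms, turning them into cubic expressions in $h$. Contracting with $F^{ij}$ and using the symmetry $F^{ij}=F^{ji}$, the terms coming from $R_{pmij}$ and $R_{pnij}$ are antisymmetric in $(i,j)$ and vanish upon contraction; what remains from the contributions $h_{pn}R_{pijm}$ and $h_{ip}R_{pnjm}$ consolidates, after using symmetry of $h$ and renaming summation indices, to exactly $F^{ij}h_{j}^{l}h_{il}h_{mn}-F^{ij}h_{m}^{l}h_{ln}h_{ij}$. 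Reinserting into the twice-differentiated equation yields the second identity.

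The main obstacle is the bookkeeping in the curvature-commutator step: one must track which pair of indices each Riemann term is antisymmetric in, and confirm that precisely the "stray" $h^{3}$-terms cancel against each other after $F^{ij}$-symmetrization, leaving the two named cubic terms with the correct signs. The remainder is a routine trace: contracting the second identity with $g^{mn}$ produces $F^{ij}\nabla_{i}\nabla_{j}\sigma_{1}$ on the left, $-F^{ij,pq}\nabla^{k}h_{ij}\nabla_{k}h_{pq}$ from the quadratic gradient term, $F^{ij}h_{j}^{m}h_{im}\sigma_{1}$ from the first cubic, $-F^{ij}h_{ij}|A|^{2}$ from the second cubic, and $\Delta f$ on the right, giving the first identity.
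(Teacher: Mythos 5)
Your proof is correct and essentially mirrors the paper's: both differentiate $F(A)=f$ twice and reduce via the Gauss, Codazzi, and Ricci identities. The paper proves the first identity by invoking the pre-derived Simons formula (\ref{Laplace}) and dismisses the second with "similarly," whereas you supply the explicit Codazzi--Ricci--Codazzi--Ricci commutation for the second identity and obtain the first by tracing with $g^{mn}$ -- the same ingredients, merely packaged in the opposite order.
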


 \begin{proof}
 Using (\ref{Laplace}), it
 follows that
 \begin{equation*}
 F^{ij}\nabla_{i}\nabla_{j}\sigma_{1}=F^{ij}h_{j}^{m}h_{im}\sigma_{1}-F^{ij}h_{ij}|A|^{2}+F^{ij}\Delta
 h_{ij}.
 \end{equation*}
On the other hand, by direct calculation, one has
 \begin{equation*}
 \begin{aligned}
 \Delta F&=\Delta f=g^{kl}\nabla_{k}\nabla_{l}F\\
 &=
 g^{kl}\nabla_{k}\left(F^{ij}\nabla_{l}h_{ij}\right)\\
 &=
 F^{ij,pq}\nabla^{k}h_{ij}\nabla_{k}h_{pq}+F^{ij}\Delta h_{ij}.
 \end{aligned}
 \end{equation*}
 The first assertion can be obtained by combining the above two identities.
 The second assertion of Lemma \ref{F_{ij}'s equation 2} can be
 proven similarly.
 \end{proof}

 \begin{remark}
\rm{Clearly, in the proofs of Lemmas \ref{F_{ij}'s equation 1} and
\ref{F_{ij}'s equation 2}, we know that $F^{ij}:=\partial F/\partial
h_{ij}$, $F^{ij,pq}:=\partial^{2}F/\partial h_{ij}\partial h_{pq}$.}
 \end{remark}

\section{$C^{1}$ estimate} \label{c1es}

\subsection{Boundary estimate}
Let $s^{+}$ be the solution of the following Dirichlet
problem\footnote{~Using similar arguments to \cite{bs,fd}, one can
easily get the existence of solutions to the Dirichlet problems
(\ref{dp-31}) and  (\ref{dp-32}) respectively. }
\begin{equation} \label{dp-31}
\left\{
\begin{aligned}
&\sigma_{1}[s]=n\left( \frac{\psi(x,u,\vartheta)}{C_{n}^{k}}
\right)^{\frac{1}{k}}, \qquad &&x\in M^{n},
\\
&s=\varphi, \qquad&&x\in \partial M^{n}.
\end{aligned}
\right.
\end{equation}
From the Mac-Laurin development, we have
$$\sigma_{1}[u] \geq \sigma_{1}[s^{+}].$$
The comparison principle for the mean curvature operator gives
$u\leq s^{+}$ in $M^{n}$, and thus $\frac{\partial u}{\partial \nu}
\geq \frac{\partial s^{+}}{\partial \nu}$. In order to get a lower
barrier, let $s^{-}$ be the solution of the following Dirichlet
problem
\begin{equation} \label{dp-32}
\left\{
\begin{aligned}
&\sigma_{n}[s]=\left( \frac{\psi(x,u,\vartheta)}{C_{n}^{k}}
\right)^{\frac{n}{k}}, \qquad &&x\in M^{n},
\\
&s=\varphi, \qquad&&x\in \partial M^{n}.
\end{aligned}
\right.
\end{equation}
Also from the Mac-Laurin development, we have
$$\sigma_{n}[u] \leq \sigma_{n}[s^{-}].$$
So $u \geq s^{-}$ in $M^{n}$, and thus $\frac{\partial u}{\partial
\nu} \leq \frac{\partial s^{-}}{\partial \nu}$.

\subsection{Maximum principle}

The upper bound on $Du$ amounts to an upper bound on
$\mathcal{W}:=\frac{1}{v}=1/\sqrt{1-|D\pi|^{2}}$, where $\pi:=\ln
u$. Therefore, it would follow from the boundary estimate once one
can prove that $\mathcal{W}e^{S\pi}$ cannot attain an interior maximum for $S$
sufficiently large under control.

\begin{proposition}\label{maximum principle}
Let $u$ be the admissible solution of the PCP \eqref{main
equations}. Then
\begin{equation*}
\sup\limits_{\overline{M^{n}}}\mathcal{W} \leq
\left(\sup\limits_{\partial M^{n}}\mathcal{W}
\right)e^{S_{2}\left(2\sup\limits_{\partial M^{n}}|\varphi| +
\mathrm{diam}(M^{n})\right)},
\end{equation*}
 where as usual $\mathrm{diam}(M^{n})$ stands for the diameter of
 the bounded domain $M^{n}\subset\mathscr{H}^{n}(1)$.
\end{proposition}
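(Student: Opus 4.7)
The plan is to examine the auxiliary function
\begin{equation*}
\Phi := \log\mathcal{W} + S\pi,
\end{equation*}
with $\pi = \ln u$ and $S \geq 1$ a constant to be chosen large depending only on the geometric data. Since $\Phi \in C^{2}(M^{n}) \cap C^{0}(\overline{M^{n}})$, it attains its maximum either on $\partial M^{n}$ or at an interior point, and I would split the argument into two pieces: first rule out an interior maximum when $S$ is chosen large enough, and then convert the resulting boundary bound into the stated estimate. The latter step is essentially bookkeeping: the spacelike condition gives $|D\pi|^{2} = u^{-2}|Du|^{2} \leq \rho^{2} < 1$, so $\pi$ is $1$-Lipschitz in the hyperbolic metric and $\mathrm{osc}_{\overline{M^{n}}}\pi \leq \mathrm{diam}(M^{n})$; combining this with the two-sided $C^{0}$ bounds on $u$ supplied by the barriers $s^{\pm}$ of Subsection~3.1 (which control $\pi|_{\partial M^{n}}$ in terms of $\sup_{\partial M^{n}}|\varphi|$) converts the boundary-max assertion into precisely the exponent $S_{2}\bigl(2\sup_{\partial M^{n}}|\varphi| + \mathrm{diam}(M^{n})\bigr)$.

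To rule out an interior maximum, suppose $\Phi$ is maximised at some $x_{0} \in M^{n}$. Choose a local $g$-orthonormal frame at $x_{0}$ diagonalising $(h_{i}^{j})$ with eigenvalues $\lambda_{1},\ldots,\lambda_{n}$, and pass to the $1$-homogeneous concave operator $F := \sigma_{k}^{1/k}$, so that \eqref{main equations} reads $F(A) = \psi^{1/k}(X,\vartheta)$ and the linearised operator $L := F^{ij}\nabla_{i}\nabla_{j}$ is elliptic on $\Gamma_{k}$. At $x_{0}$ the critical-point conditions read $\nabla_{i}\log\mathcal{W} = -S\pi_{i}$ and
\begin{equation*}
0 \;\geq\; L\Phi \;=\; \frac{L\mathcal{W}}{\mathcal{W}} \,+\, SL\pi \,-\, S^{2}F^{ij}\pi_{i}\pi_{j},
\end{equation*}
where the last identity uses $L\log\mathcal{W} = L\mathcal{W}/\mathcal{W} - F^{ij}\mathcal{W}_{i}\mathcal{W}_{j}/\mathcal{W}^{2}$ together with the first critical-point relation. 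The key computations are (i) an expansion of $L\mathcal{W}$ obtained from the Gauss formula \eqref{Gauss for}, the Weingarten formula \eqref{Wein for}, Lemma~\ref{F_{ij}'s equation 1}, and the identity $\vartheta = u\mathcal{W}$ (itself arising from $X = u\partial_{r}$), producing a leading positive term of the form $\mathcal{W}F^{ij}h_{i}^{k}h_{kj}$ accompanied by a coupling with $\partial_{\vartheta}\psi^{1/k}$; and (ii) a computation of $L\pi$ via inversion of \eqref{h_{ij}} to express $u_{ij}$ in terms of $h_{ij}$, yielding a term $-c\sum_{i}F^{ii}$ with $c>0$ depending on $\inf u$ and $\rho$ together with bounded remainders. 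The structural hypothesis \eqref{f's condition} is exploited in step (i): combined with the homogeneity identity $F^{ij}h_{ij} = F = \psi^{1/k}$, it neutralises the $\partial_{\vartheta}\psi$-coupling and leaves a sign-favourable remainder.

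Plugging the two expansions into $L\Phi \leq 0$ produces a contradiction once $S$ is chosen sufficiently large in terms of $n$, $\rho$, $\inf u$, $\|\varphi\|_{C^{1}}$ and $\|\psi\|_{C^{2}}$: the dominant contribution $-cS\sum_{i}F^{ii}$ coming from $SL\pi$ beats both the remainder in $L\mathcal{W}/\mathcal{W}$ and the non-positive term $-S^{2}F^{ij}\pi_{i}\pi_{j}$, forcing $L\Phi > 0$ strictly at $x_{0}$. I expect the main obstacle to be the bookkeeping inside the expansion of $L\mathcal{W}$: one must differentiate $\psi(X,\vartheta)$ twice along $\mathcal{G}$, commute covariant derivatives by invoking the Codazzi and Ricci identities of Section~\ref{S2}, and verify that the convexity of $\psi^{1/k}$ in $\vartheta$ together with \eqref{f's condition} cleanly cancels the $\vartheta$-dependence; the outer maximum-principle mechanism is, by contrast, standard.
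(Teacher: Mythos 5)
Your outer framework (the auxiliary function $\Phi=\log\mathcal{W}+S\pi$, ruling out an interior maximum for large $S$, and converting the boundary maximum into the stated bound via the $C^{0}$ barriers of Subsection~3.1 and the Lipschitz bound $|D\pi|\leq\rho<1$) matches the paper. However, the mechanism you propose for excluding the interior maximum is genuinely different from the paper's. The paper never touches the linearised operator $L=F^{ij}\nabla_{i}\nabla_{j}$ and never invokes the structural hypothesis \eqref{f's condition} in this proposition: instead, it differentiates the scalar equation $\sigma_{k}=\psi$ once along the gradient direction $e_{1}$, multiplies by $\pi_{1}$, and feeds in the first- and second-order criticality relations of $\Phi$ (which pin down $\pi_{11}=-Sv^{2}$, $\pi_{1i}=0$ for $i>1$, and one-sided bounds on $\pi_{1ii}$) to obtain, after algebraic manipulation of $\sigma_{k}$ and $\sigma_{k-1}$, the inequality $k\sigma_{k}S\leq\sup|D\psi|$, which is absurd once $S>S_{2}$. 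In particular the paper's gradient estimate does not consume the convexity-in-$\vartheta$ or \eqref{f's condition}, whereas your argument spends those hypotheses here.

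Beyond being a different route, your sketch as written has a sign problem at its core. At an interior maximum of $\Phi$ one has $L\Phi\leq 0$, so a contradiction must come from showing $L\Phi>0$; this requires the dominant contribution to $L\Phi$ to be \emph{positive} when $S$ (or the gradient) is large. But you state that $L\pi$ contributes $-c\sum_{i}F^{ii}$ with $c>0$, and a direct computation confirms that the term $-u^{-2}\sum_{i}F^{ii}$ really does appear with that sign in $F^{ij}\nabla_{i}\nabla_{j}\pi$ (alongside the nonpositive term $-2u^{-2}F^{ij}\nabla_{i}u\nabla_{j}u$). Thus $SL\pi$ becomes \emph{more negative} as $S$ grows, reinforcing $L\Phi\leq 0$ rather than contradicting it. Your concluding sentence --- that the negative term $-cS\sum_{i}F^{ii}$ ``beats'' the other terms and thereby ``forces $L\Phi>0$'' --- is internally inconsistent: a dominant negative term can only push $L\Phi$ down. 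To make an $L$-based argument work you would need the positive term (coming from $F^{ij}h_{im}h_{jm}$ in $L\mathcal{W}/\mathcal{W}$, which scales like $\mathcal{W}^{2}$) to overwhelm both $SL\pi$ and $-S^{2}F^{ij}\pi_{i}\pi_{j}$ when $\mathcal{W}$ is large while $S$ is held fixed --- a different balancing than ``choose $S$ large,'' and one whose feasibility you have not verified. As it stands, the interior-exclusion step does not close, and since that is the heart of the proposition, the gap is essential.
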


\begin{proof}
By contradiction, suppose that
$\sup_{\overline{M^{n}}}\mathcal{W}e^{S\pi}$ is achieved at an
interior point $x_{0}\in M^{n}$. At $x_0$, we choose a nice basis
for the convenience of computations, that is, let $\{e_{1}, e_{2},
\cdots, e_{n}\}$ be an orthonormal basis of $T_{x_0}M^{n}$ (i.e.,
the tangent space at $x_{0}$ diffeomorphic to $\mathbb{R}^{n}$) such
that $D\pi(x_{0}) = |D\pi(x_{0})|e_{1}$, and moreover, the matrix
$\left((D^{2}\pi(x_0))_{ij}\right)_{(n-1)\times(n-1)}$, $2\leq
i,j\leq n$, is orthogonal under the basis $\{e_{2}, \cdots,
e_{n}\}$. Since $|\pi_{1}| \leq |D\pi|$ on $\overline{M^{n}}$ and
$\pi_{1}(x_{0}) = |D\pi(x_{0})|$. The function
$$\ln\left( \frac{1}{\sqrt{1-\pi_{1}^{2}}}\right) + S\pi = -\frac{1}{2}\ln\left( 1-\pi_{1}^{2} \right) + S\pi $$
has a maximum at $x_{0}$ as well. Hence, at $x_{0}$, for any $i\in\{
1,\cdots,n \}$, one has
$$\frac{\pi_{1i}\pi_{1}}{1-\pi_{1}^{2}} + S\pi_{i} = 0.$$
So, the matrix of the curvature operator is diagonal, with diagonal
entries $(\frac{1}{uv}(1+\frac{\pi_{11}}{v^{2}}),
\frac{1}{uv}(1+\pi_{22}), \cdots, \frac{1}{uv}(1+\pi_{nn}) )$.
Moreover, still at $x_{0}$, one has $\pi_{111}\pi_{1} \leq
-\pi_{11}^{2} - \frac{2(\pi_{1}\pi_{11})^{2}}{1-\pi_{1}^{2}} -
S\pi_{11}(1-\pi_{1}^{2})$, and for $i>1$, $\pi_{1ii}\pi_{1} \leq
-(1-\pi_{1}^{2})S\pi_{ii}$. Then we have
\begin{equation*}
\sum_{i=1}^{n}\frac{\partial
\sigma_{k}}{\partial\lambda_{i}}\cdot\lambda_{i,1} =
\sum_{i=1}^{n}\frac{\partial\sigma_{k}}{\partial\lambda_{i}}\cdot
h_{i,1}^{i} = \psi_{1}.
\end{equation*}
Since $h_{i}^{i} = \frac{1}{uv}\left(1+(\sigma^{ik} +
\frac{\pi^{i}\pi^{k}}{v^{2}})\pi_{ik}\right)$, we have
\begin{equation*}
h_{1,1}^{1} = \frac{3\pi_{1}\pi_{11}^{2}}{uv^{5}} +
\frac{\pi_{111}}{uv^{3}} - \frac{\pi_{1}}{uv} =
\frac{\pi_{1}(3S^{2}-1)}{uv} + \frac{\pi_{111}}{uv^{3}},
\end{equation*}
\begin{equation*}
\begin{aligned}
h_{i,1}^{i} &= \frac{\pi_{1}\pi_{11}\pi_{ii}}{uv^{3}} + \frac{\pi_{ii1}}{uv} + \frac{\pi_{1}\pi_{11}}{uv^{3}} - \frac{\pi_{1}}{uv} - \frac{\pi_{1}\pi_{ii}}{uv}\\
&= \frac{\pi_{ii1}}{uv} - \frac{\pi_{1}\pi_{ii}(S+1)}{uv} -
\frac{\pi_{1}(S+1)}{uv} \qquad \mathrm{for}~~ i>1.
\end{aligned}
\end{equation*}
The differentiated equation, multiplied by $\pi_{1}$, becomes:
\begin{equation*}
\begin{aligned}
&\frac{\partial\sigma_{k}}{\partial\lambda_{1}}\left( \frac{\pi_{1}^{2}(3S^{2}-1)}{uv} + \frac{\pi_{111}\pi_{1}}{uv^{3}} \right)\\
&+ \sum_{i\geq2}\frac{\partial\sigma_{k}}{\partial\lambda_{i}}\left(
\frac{\pi_{ii1}\pi_{1}}{uv} - \frac{\pi_{1}^{2}\pi_{ii}(S+1)}{uv} -
\frac{\pi_{1}^{2}(S+1)}{uv} \right) = \pi_{1}\psi_{1}.
\end{aligned}
\end{equation*}
From the maximum conditions, we have
\begin{equation*}
\frac{\pi_{1}^{2}(3S^{2}-1)}{uv} + \frac{\pi_{111}\pi_{1}}{uv^{3}}
\leq \frac{\pi_{1}^{2}(S^{2}-1)}{uv},
\end{equation*}
and, since $\pi_{1ii} = \pi_{ii1} - \pi_{1}$, we have
\begin{equation*}
\begin{aligned}
&\quad
\frac{\pi_{ii1}\pi_{1}}{uv} - \frac{\pi_{1}^{2}\pi_{ii}(S+1)}{uv} - \frac{\pi_{1}^{2}(S+1)}{uv}\\
&\leq -\frac{1}{u}vS\pi_{ii} - \frac{\pi_{1}^{2}S}{uv} -
\frac{\pi_{1}^{2}\pi_{ii}(S+1)}{uv}.
\end{aligned}
\end{equation*}
Then we can infer
\begin{equation*}
\frac{\partial\sigma_{k}}{\partial\lambda_{1}}\cdot\frac{\pi_{1}^{2}(S^{2}-1)}{uv}
- \sum_{i\geq 2}\frac{\partial\sigma_{k}}{\partial\lambda_{i}}\left(
\frac{1}{u}vS\pi_{ii} + \frac{\pi_{1}^{2}S}{uv} +
\frac{\pi_{1}^{2}\pi_{ii}(S+1)}{uv} \right) \geq \pi_{1}\psi_{1},
\end{equation*}
and finally can obtain
\begin{equation*}
-k\sigma_{k}(\pi_{1}^{2} + S) + (n-k+1)\sigma_{k-1}\cdot\frac{2S +
1}{uv} + \frac{\partial\sigma_{k}}{\partial\lambda_{1}}\left(
\frac{vS(1-S)}{u} - \frac{2S+1}{uv} \right) \geq \pi_{1}\psi_{1}.
\end{equation*}
We hope
\begin{equation*}
(n-k+1)\sigma_{k-1}\cdot\frac{2S + 1}{uv} +
\frac{\partial\sigma_{k}}{\partial\lambda_{1}}\left(
\frac{vS(1-S)}{u} - \frac{2S+1}{uv} \right) \leq 0,
\end{equation*}
which is equivalent to
$$\sigma_{k-1}(\lambda)\left( v^{2}S(1-S) + (n+1)(2S+1) \right) \leq 0.$$
Since $\pi_{1}^{2}\leq \rho^{2}<1$, choosing $S=S_{1}$ large enough
such that $\frac{S_{1}(S_{1}-1)}{2S_{1}+1}\geq
\frac{n+1}{1-\rho^{2}}$, so we have
$$k\sigma_{k}S \leq \sup\limits_{\overline{M^{n}}}|D\psi|.$$
Then choosing
$S_{2}>\max\left\{\frac{\sup\limits_{\overline{M^{n}}}|D\psi|}{k\inf\limits_{\overline{M^{n}}}\psi},
S_{1}\right\}$, we reach a contradiction.
\end{proof}

\section{Curvature Estimates} \label{S3}

\subsection{The first curvature estimate}

We write (\ref{main equations}) in the form
\begin{equation}\label{3.1}
F(A)=\sigma_{k}^{\frac{1}{k}}(A)=\psi^{\frac{1}{k}}(X,\vartheta)=f(X,\vartheta)
\qquad \mathrm{for ~ any} ~ X\in\mathcal{G}.
\end{equation}

\begin{proof}[Proof of Theorem \ref{main1.1}]
 Consider the function
\begin{eqnarray*}
W(A)=\sigma_{1}(A),
\end{eqnarray*}
which attains its maximum value at some
$X_{0}=(x_{0},u(x_{0}))\in\mathcal{G}$. If $x_{0}\in\partial M^{n}$,
then our claim (\ref{A's boundary}) follows directly. Now, we try to
prove this claim in the case that $x_{0}\notin\partial M^{n}$.
Choose the frame fields  $e_{1},e_{2},\cdots,e_{n}, \nu$ at $X_{0}$
such that $e_{1},e_{2},\cdots,e_{n}\in T_{X_{0}}\mathcal{G}$ at
$X_{0}$ and $(h_{ij})_{n\times n}$ is diagonal at $X_{0}$ with
eigenvalues $h_{11}\geq h_{22}\geq\cdots\geq h_{nn}$. Here, as
usual, $T_{X_{0}}\mathcal{G}$ denotes the tangent space of the
graphic hypersurface $\mathcal{G}$ at $X_{0}$. For each
$i=1,\ldots,n$, we have
\begin{equation*}\label{3.2}
\nabla_{i}\sigma_{1}=0 \qquad \mathrm{at}~ X_{0}.
\end{equation*}
Therefore, at $X_{0}$, it follows that
\begin{equation}\label{3.3}
\begin{split}
0&\geq F^{ij}\nabla_{i}\nabla_{j}\sigma_{1}\\
&=-F^{ij,pq}\nabla_{l}h_{ij}\nabla_{l}h_{pq}+F^{ij}h_{im}h_{mj}\sigma_{1}-F^{ij}h_{ij}|A|^{2}+\Delta
f.
\end{split}
\end{equation}
Since $f$ is convex in $\vartheta$, together with Lemma
\ref{F_{ij}'s equation 1}, we have
\begin{equation}\label{3.4}
\begin{split}
\Delta f&=\frac{\partial^2 f}{\partial X^{\alpha}\partial X^{\beta}}\nabla_{l}X^{\alpha}\nabla_{l}X^{\beta}+2\frac{\partial^2 f}{\partial X^{\alpha}\partial \vartheta}\nabla_{l}X^{\alpha}\nabla_{l}\vartheta\\
&\quad
+\frac{\partial^2 f}{\partial \vartheta^{2}}|\nabla \vartheta|^{2}+\frac{\partial f}{\partial X^{\alpha}}\Delta X^{\alpha}+\frac{\partial f}{\partial \vartheta}\Delta \vartheta\\
&\geq\frac{\partial f}{\partial \vartheta}\Delta \vartheta+\frac{\partial^2 f}{\partial \vartheta^{2}}|\nabla \vartheta|^{2}-c_{1}\sigma_{1}-c_{2}\\
&\geq\frac{\partial f}{\partial
\vartheta}\vartheta|A|^{2}-c_{1}\sigma_{1}-c_{2},
\end{split}
\end{equation}
where positive constants $c_{1}$, $c_{2}$ depend on
$||\varphi||_{C^{1}(\overline{M^{n}})}$,
$||\psi||_{C^{2}\left(\overline{M^{n}}\times\left[\inf\limits_{\partial
M^{n}}u,\sup\limits_{\partial
M^{n}}u\right]\times\mathbb{R}\right)}$, and $X^{\alpha}:=\langle X,
\partial_{\alpha}\rangle_{L}$, $\alpha=1,2,\ldots,n+1$. Obviously,
$\partial_{1},\partial_{2},\cdots,\partial_{n}$ are the
corresponding coordinate vector fields on $\mathscr{H}^{n}(1)$,
$\partial_{n+1}:=\partial_{r}$. Putting (\ref{3.4}) into (\ref{3.3})
yields
\begin{equation}\label{3.5}
\begin{split}
0&\geq F^{ij}\nabla_{i}\nabla_{j}\sigma_{1}\\
&\geq -F^{ij,pq}\nabla_{l}h_{ij}\nabla_{l}h_{pq}+F^{ij}h_{im}h_{mj}\sigma_{1}\\
&\quad
+(\frac{\partial f}{\partial \vartheta}\vartheta-f)|A|^{2}-c_{1}\sigma_{1}-c_{2}\\
&\geq F^{ij}h_{im}h_{mj}\sigma_{1}-c_{1}\sigma_{1}-c_{2},
\end{split}
\end{equation}
where we have used (\ref{f's condition}) and the concavity of $F$.
On the other hand, by Lemma \ref{sg's formula}, one has
\begin{equation}\label{3.6}
\begin{split}
F^{ij}h_{im}h_{mj}&=\frac{1}{k} \sigma_{k}^{\frac{1}{k} -1}\left[\sigma_{k}\sigma_{1}-(k+1)\sigma_{k+1}\right]\\
&\geq\frac{1}{n}\sigma_{k}^{\frac{1}{k}}\sigma_{1},
\end{split}
\end{equation}
where the last inequality can be derived from the Newton
inequalities for $\sigma_{k+1}>0$,
\begin{eqnarray*}
\frac{\sigma_{k+1}}{C_{n}^{k+1}}\frac{\sigma_{k-1}}{C_{n}^{k-1}}\leq\left(\frac{\sigma_{k}}{C_{n}^{k}}\right)^{2}.
\end{eqnarray*}
Taking (\ref{3.6}) into (\ref{3.5}), it is easy to know that
 $\sigma_{1}$ is bounded. Then the conclusion of
Theorem \ref{main1.1}, i.e. (\ref{A's boundary}), follows naturally.
\end{proof}

\subsection{The second curvature estimate} \label{SU3-2}
Let
$$\mathcal{P}(\lambda):=F(A)=\sigma_{k}^{\frac{1}{k}}(A)=f(X, \vartheta) \qquad \mathrm{for~any}~X\in\mathcal{G}.$$
Set
\begin{equation}\label{set 1}
\sigma_{k}^{\frac{1}{k}}(\lambda_{1}, \cdots,
\lambda_{n})=\mathcal{P}(\lambda_{1}, \cdots, \lambda_{n}),
\end{equation}
\begin{equation}\label{set 3}
\mathrm{tr}F^{ij}=\sum\limits_{i=1}^{n}F^{ii},\qquad
\mathcal{P}_{i}=\frac{\partial \mathcal{P}}{\partial\lambda_{i}}.
\end{equation}
First, we list a useful lemma, which can be found in, e.g.,
\cite{ba,WJJ,ju2}.
\begin{lemma}\label{sym}
For any symmetric matrix $\eta=(\eta_{ij})$, we have
\begin{equation}\label{symmetric}
F^{ij,pq}\eta_{ij}\eta_{pq}=
\sum\limits_{i,j}\frac{\partial^{2}\mathcal{P}}{\partial\lambda_{i}\partial\lambda_{j}}\eta_{ii}\eta_{jj}+\sum\limits_{i\neq
j}\frac{\mathcal{P}_{i}-\mathcal{P}_{j}}{\lambda_{i}-\lambda_{j}}\eta_{ij}^{2}.
\end{equation}
The second term on RHS of (\ref{symmetric}) is nonpositive if
$\mathcal{P}$ is concave, and it is interpreted as the limit if
$\lambda_{i}=\lambda_{j}$.
\end{lemma}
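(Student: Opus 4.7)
The identity is the classical Ball--Gerhardt--Urbas formula for the second derivative of a spectral function of a symmetric matrix, so my plan is to follow the standard one-parameter-family argument rather than reinvent it. The key observation is that both $F(A)$ and $\mathcal{P}(\lambda(A))$ are invariant under conjugation by $O(n)$, so it suffices to verify (\ref{symmetric}) at a point where $A$ is already diagonal, say $A=\mathrm{diag}(\lambda_1,\ldots,\lambda_n)$; the symmetric perturbation $\eta=(\eta_{ij})$ can then be arbitrary.

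First I would introduce the one-parameter family $A(t)=A+t\eta$ and interpret the left-hand side as
$$F^{ij,pq}\eta_{ij}\eta_{pq}=\frac{d^{2}}{dt^{2}}F\bigl(A(t)\bigr)\Big|_{t=0}.$$
Since $F(A(t))=\mathcal{P}\bigl(\lambda_1(t),\ldots,\lambda_n(t)\bigr)$, the chain rule yields
$$\frac{d^{2}}{dt^{2}}F\bigl(A(t)\bigr)\Big|_{t=0}=\sum_{i,j}\mathcal{P}_{ij}\,\dot\lambda_i\dot\lambda_j+\sum_{i}\mathcal{P}_i\,\ddot\lambda_i.$$
Next I would invoke first- and second-order eigenvalue perturbation theory at the diagonal matrix $A$ with (first assumed) simple spectrum: $\dot\lambda_i=\eta_{ii}$ and $\ddot\lambda_i=2\sum_{j\neq i}\eta_{ij}^{2}/(\lambda_i-\lambda_j)$. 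Substituting these and symmetrizing the off-diagonal sum in $(i,j)$ converts $2\sum_{j\neq i}\mathcal{P}_i\,\eta_{ij}^{2}/(\lambda_i-\lambda_j)$ into $\sum_{i\neq j}(\mathcal{P}_i-\mathcal{P}_j)\eta_{ij}^{2}/(\lambda_i-\lambda_j)$, producing exactly (\ref{symmetric}). The repeated-eigenvalue case is handled by continuity: both sides are smooth in $(A,\eta)$ on a dense set of matrices, so the formula persists in the limit with the convention that $(\mathcal{P}_i-\mathcal{P}_j)/(\lambda_i-\lambda_j)$ is read as $\mathcal{P}_{ii}-\mathcal{P}_{ij}$ when $\lambda_i=\lambda_j$.

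For the sign assertion, assuming concavity of $\mathcal{P}$, I would fix $i\neq j$ and consider the one-variable function
$$h(s):=\mathcal{P}(\lambda_1,\ldots,\lambda_i-s,\ldots,\lambda_j+s,\ldots,\lambda_n),$$
which is concave in $s$ and satisfies $h(0)=h(\lambda_i-\lambda_j)$ by symmetry of $\mathcal{P}$. A concave function taking equal values at two points $0$ and $s_0=\lambda_i-\lambda_j$ must satisfy $s_0\,h'(0)\geq 0$, which rearranges to $(\mathcal{P}_i-\mathcal{P}_j)(\lambda_i-\lambda_j)\leq 0$; hence every summand $(\mathcal{P}_i-\mathcal{P}_j)\eta_{ij}^{2}/(\lambda_i-\lambda_j)$ is nonpositive, and the limiting case $\lambda_i=\lambda_j$ reduces to the standard fact that the Hessian of a concave symmetric function satisfies $\mathcal{P}_{ii}-\mathcal{P}_{ij}\leq 0$.

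The only genuinely delicate point is the rigorous justification of the perturbation expansions $\dot\lambda_i=\eta_{ii}$ and $\ddot\lambda_i=2\sum_{j\neq i}\eta_{ij}^{2}/(\lambda_i-\lambda_j)$, since eigenvalues need not be smooth functions of the matrix in the presence of multiplicities. I would address this either by invoking Kato's analytic perturbation theory on the dense open set of simple-spectrum matrices, or by differentiating the characteristic polynomial identity $\det(A(t)-\lambda_i(t)I)=0$ twice and solving for $\dot\lambda_i,\ddot\lambda_i$ at $t=0$ in the chosen diagonal frame; this computation is routine but is the one place where notational care is required.
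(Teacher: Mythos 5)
The paper does not actually prove Lemma~\ref{sym}; it is stated as a known fact with citations to Andrews, Sheng--Urbas--Wang and Urbas, so there is no ``paper proof'' to compare against. Your argument is the standard one found in those references and is correct: reduce to a diagonal $A$ by the $O(n)$-invariance of both sides under $(A,\eta)\mapsto(O^{T}AO,\,O^{T}\eta O)$; read the left side as $\tfrac{d^2}{dt^2}F(A+t\eta)\big|_{t=0}$; plug in the first- and second-order eigenvalue perturbation formulas $\dot\lambda_i=\eta_{ii}$ and $\ddot\lambda_i=2\sum_{j\ne i}\eta_{ij}^2/(\lambda_i-\lambda_j)$; symmetrize the off-diagonal sum to obtain the difference quotient; and extend by continuity from the dense open set of simple spectrum, with $\bigl(\mathcal{P}_i-\mathcal{P}_j\bigr)/(\lambda_i-\lambda_j)\to\mathcal{P}_{ii}-\mathcal{P}_{ij}$ as $\lambda_j\to\lambda_i$. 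Your concavity argument via the symmetric exchange $h(s)=\mathcal{P}(\ldots,\lambda_i-s,\ldots,\lambda_j+s,\ldots)$, with $h(0)=h(\lambda_i-\lambda_j)$ forcing $(\lambda_i-\lambda_j)h'(0)\geq0$ and hence $(\mathcal{P}_i-\mathcal{P}_j)(\lambda_i-\lambda_j)\leq0$, is likewise the standard one and is correct. The only caveat worth flagging, which you already anticipate, is that the continuity argument requires knowing both sides extend continuously across eigenvalue crossings; this is true here because $F$ is $C^2$ (indeed smooth) whenever $\mathcal{P}$ is, and the right-hand side with the limit convention is continuous, so agreement on a dense set suffices.
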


\begin{proof} [Proof of Theorem \ref{main 1.2}]
Let $\eta=\varphi-u$ and, as before, for any point $x_{0}\in M^{n}$,
$X_{0}=(x_{0}, u(x_{0}))$. Denote by $\omega$ the constant function,
whose graph is the hyperbolic plane of center at origin and radius
$R$ (i.e., $\mathscr{H}^{n}(R)$), lying above the graph of $\varphi$
such that $\omega(x_{0})=\varphi(x_{0})$ and
$D\omega(x_{0})=D\varphi(x_{0})$.

Then, for large enough $R$ and small enough $\epsilon>0$, we have
$F\left[(\omega-\epsilon)(A)\right]<F[u(A)]$ in
$M^{n}_{\epsilon}:=\{x\in M^{n}|
\omega(x)-\epsilon<\varphi(x)\}\subset\subset M^{n}$ and
$\omega(x)-\epsilon=\varphi(x)\geq u$ on $\partial
M^{n}_{\epsilon}$. By the comparison principle we then have
$u\leq\omega(x)-\epsilon$ in $M^{n}_{\epsilon}$. Consequently
$(\varphi-u)(x_{0})\geq\epsilon$, so we have $\eta>0$ in $M^{n}$.

We now consider the function
$$G=\eta^{\alpha}e^{\Psi(\vartheta)}h_{ij}\chi_{i}\chi_{j},$$
achieving its maximum value at some $X_{0}\in\mathcal{G}$, where
$\alpha\geq 1$, $\Psi$ is a function determined later and satisfies
$\Psi^{'}:=\frac{\partial\Psi}{\partial\vartheta}\geq 0$. Without
loss of generality,  one may choose the frame fields $e_{1}=\chi$,
$e_{2}$, $\ldots$, $e_{n}$, $\nu$ such that $e_{1}, e_{2}, \cdots,
e_{n}\in T_{X_{0}}\mathcal{G}$, $\nabla_{e_{i}}e_{j}=0$ at $X_{0}$
for all $i, j=1, \ldots, n$, and $(h_{ij})_{n\times n}$ is diagonal
at $X_{0}$ with eigenvalues $h_{11}\geq h_{22}\geq\cdots\geq
h_{nn}$. At $X_{0}$, for each $i=1, \cdots, n$, one has
\begin{equation}\label{max 1}
\alpha\frac{\nabla_{i}\eta}{\eta}+\Psi^{'}\nabla_{i}\vartheta+\frac{\nabla_{i}h_{11}}{h_{11}}=0,
\end{equation}
\begin{equation*}\label{max 2}
\begin{aligned}
\alpha\left(\frac{\nabla_{i}\nabla_{j}\eta}{\eta}-\frac{\nabla_{i}\eta\nabla_{j}\eta}{\eta^{2}}\right)&+\Psi^{''}\nabla_{i}\vartheta\nabla_{j}\vartheta\\
&
+\Psi^{'}\nabla_{i}\nabla_{j}\vartheta+\frac{\nabla_{i}\nabla_{j}h_{11}}{h_{11}}-\frac{\nabla_{i}h_{11}\nabla_{j}h_{11}}{h_{11}^{2}}\leq
0.
\end{aligned}
\end{equation*}
Therefore, by Lemma \ref{F_{ij}'s equation 2}, we have
\begin{equation*}\label{max 3}
\begin{aligned}
0&\geq\alpha
F^{ij}\left(\frac{\nabla_{i}\nabla_{j}\eta}{\eta}-\frac{\nabla_{i}\eta\nabla_{j}\eta}{\eta^{2}}\right)+\Psi^{''}F^{ij}\nabla_{i}\vartheta\nabla_{j}\vartheta
+\Psi^{'}F^{ij}\nabla_{i}\nabla_{j}\vartheta\\
&\quad
+F^{ij}\frac{\nabla_{i}\nabla_{j}h_{11}}{h_{11}}-F^{ij}\frac{\nabla_{i}h_{11}\nabla_{j}h_{11}}{h_{11}^{2}}\\
&= \alpha
F^{ij}\left(\frac{\nabla_{i}\nabla_{j}\eta}{\eta}-\frac{\nabla_{i}\eta\nabla_{j}\eta}{\eta^{2}}\right)+\Psi^{''}F^{ij}\nabla_{i}\vartheta\nabla_{j}\vartheta
+\Psi^{'}F^{ij}\nabla_{i}\nabla_{j}\vartheta\\
&\quad
-fh_{11}+F^{ij}h_{im}h_{jm}+\frac{\nabla_{1}\nabla_{1}f}{h_{11}}-\frac{1}{h_{11}}F^{ij,
pq}\nabla_{1}h_{ij}\nabla_{1}h_{pq}-F^{ij}\frac{\nabla_{i}h_{11}\nabla_{j}h_{11}}{h_{11}^{2}}.
\end{aligned}
\end{equation*}
We also find that
\begin{equation*}\label{support function}
F^{ij}\nabla_{i}\nabla_{j}\vartheta=\vartheta
F^{ij}h_{im}h_{jm}+f+\nabla_{l}f\langle X, X_{l}\rangle_{L}.
\end{equation*}
Consequently,
\begin{equation}\label{max 4}
\begin{aligned}
0&\geq\alpha
F^{ij}\left(\frac{\nabla_{i}\nabla_{j}\eta}{\eta}-\frac{\nabla_{i}\eta\nabla_{j}\eta}{\eta^{2}}\right)+\Psi^{''}F^{ij}\nabla_{i}\vartheta\nabla_{j}\vartheta
+\Psi^{'}\nabla_{l}f\langle X, X_{l}\rangle_{L}-fh_{11}\\
&\quad
+\left(\Psi^{'}\vartheta+1\right)F^{ij}h_{im}h_{jm}+\frac{\nabla_{1}\nabla_{1}f}{h_{11}}-
\frac{1}{h_{11}}F^{ij,
pq}\nabla_{1}h_{ij}\nabla_{1}h_{pq}-F^{ij}\frac{\nabla_{i}h_{11}\nabla_{j}h_{11}}{h_{11}^{2}}.
\end{aligned}
\end{equation}
Since $f$ is convex in $\vartheta$, we have
\begin{equation*}
\nabla_{1}f=\frac{\partial f}{\partial
X^{\alpha}}\nabla_{1}X^{\alpha}+\frac{\partial
f}{\partial\vartheta}\nabla_{1}\vartheta,
\end{equation*}
\begin{equation*}
\begin{aligned}
\nabla_{1}\nabla_{1}f&=\frac{\partial^{2}f}{\partial
X^{\alpha}\partial X^{\beta}}\nabla_{1}X^{\alpha}\nabla_{1}X^{\beta}
+2\frac{\partial^{2}f}{\partial X^{\alpha}\partial\vartheta}\nabla_{1}X^{\alpha}\nabla_{1}\vartheta+\frac{\partial^{2}f}{\partial\vartheta^{2}}|\nabla_{1}\vartheta|^{2}\\
&\quad
+\frac{\partial f}{\partial X^{\alpha}}\nabla_{1}\nabla_{1}X^{\alpha}+\frac{\partial f}{\partial\vartheta}\nabla_{1}\nabla_{1}\vartheta\\
&\geq
\frac{\partial f}{\partial\vartheta}\nabla_{1}\nabla_{1}\vartheta-c_{3}h_{11}-c_{4}\\
&= \frac{\partial f}{\partial\vartheta}\left(\vartheta
h_{11}^{2}+\nabla_{l}h_{11}\langle X,
X_{l}\rangle_{L}\right)-c_{3}h_{11}-c_{4},
\end{aligned}
\end{equation*}
where $c_{3}$, $c_{4}$ are positive constants depending on
$||\varphi||_{C^{1}(\overline{M^{n}})}$ and
$||\psi||_{C^{2}\left(\overline{M^{n}}\times\left[\inf\limits_{\partial
M^{n}}u,\sup\limits_{\partial
M^{n}}u\right]\times\mathbb{R}\right)}$. Inserting this into
(\ref{max 4}) yields
\begin{equation}\label{max 5}
\begin{aligned}
0&\geq\alpha
F^{ij}\left(\frac{\nabla_{i}\nabla_{j}\eta}{\eta}-\frac{\nabla_{i}\eta\nabla_{j}\eta}{\eta^{2}}\right)+\Psi^{''}F^{ij}\nabla_{i}\vartheta\nabla_{j}\vartheta
+\Psi^{'}\nabla_{l}f\langle X, X_{l}\rangle_{L}+\left(\frac{\partial f}{\partial\vartheta}\cdot\vartheta-f\right)h_{11}\\
&\quad
+\left(\Psi^{'}\vartheta+1\right)F^{ij}h_{im}h_{jm}+\frac{\partial
f}{\partial\vartheta}\frac{\nabla_{l}h_{11}\langle X,
X_{l}\rangle_{L}}{h_{11}}
-\frac{1}{h_{11}}F^{ij, pq}\nabla_{1}h_{ij}\nabla_{1}h_{pq}\\
&\quad
-F^{ij}\frac{\nabla_{i}h_{11}\nabla_{j}h_{11}}{h_{11}^{2}}-c_{3},
\end{aligned}
\end{equation}
where we have assumed that $h_{11}$ is sufficiently large.
Otherwise, the assertion of Theorem \ref{main 1.2} holds.

Next, we assume that $\varphi$ has been extended to be constant in
the $\partial_{r}$ direction\footnote{~This can be assured, since
$\varphi$ is defined on $\overline{M^{n}}$ and of course one can
require its extension to the normal bundle of $M^{n}$ to be
constant.}. Therefore,
\begin{equation*}
\begin{aligned}
\nabla_{i}\nabla_{j}\eta&=
\sum_{\alpha,\beta=1}^{n}\frac{\partial^{2}\varphi}{\partial
X^{\alpha}\partial
X^{\beta}}\nabla_{i}X^{\alpha}\nabla_{j}X^{\beta}+
\sum_{\alpha=1}^{n}\frac{\partial\varphi}{\partial X^{\alpha}}\nabla_{i}\nabla_{j}X^{\alpha}-u_{ij}\\
&\geq \sum_{\alpha=1}^{n}\frac{\partial\varphi}{\partial
X^{\alpha}}\nu^{\alpha}h_{ij}-c_{5}h_{ij}v,
\end{aligned}
\end{equation*}
where $c_{5}>0$ depends on $||\varphi||_{C^{1}(\overline{M^{n}})}$
and we have again used Gaussian formula and the assumption that
$\varphi$ is affine. Consequently,
\begin{equation}\label{max 6}
F^{ij}\nabla_{i}\nabla_{j}\eta\geq\left(\sum_{\alpha=1}^{n}\frac{\partial\varphi}{\partial
X^{\alpha}}\nu^{\alpha}-c_{5}v\right)F^{ij}h_{ij}\geq-c_{6},
\end{equation}
where positive constant $c_{6}$ depends on $c_{5}$,
$||\psi||_{C^{0}\left(\overline{M^{n}}\times\left[\inf\limits_{\partial
M^{n}}u,\sup\limits_{\partial
M^{n}}u\right]\times\mathbb{R}\right)}$ and
$||\varphi||_{C^{1}(\overline{M^{n}})}$. Combining (\ref{max 5}) and
(\ref{max 6}), at $X_{0}$, we have
\begin{equation}\label{max 7}
\begin{aligned}
0&\geq-\frac{c_{6}\alpha}{\eta}-\alpha
F^{ij}\frac{\nabla_{i}\eta\nabla_{j}\eta}{\eta^{2}}+\Psi^{''}F^{ij}\nabla_{i}\vartheta\nabla_{j}\vartheta
+\Psi^{'}\nabla_{l}f\langle X, X_{l}\rangle_{L}+\left(\frac{\partial f}{\partial\vartheta}\cdot\vartheta-f\right)h_{11}\\
&\quad
+\left(\Psi^{'}\vartheta+1\right)F^{ij}h_{im}h_{jm}+\frac{\partial
f}{\partial\vartheta}\frac{\nabla_{l}h_{11}\langle X,
X_{l}\rangle_{L}}{h_{11}}
-\frac{1}{h_{11}}F^{ij, pq}\nabla_{1}h_{ij}\nabla_{1}h_{pq}\\
&\quad
-F^{ij}\frac{\nabla_{i}h_{11}\nabla_{j}h_{11}}{h_{11}^{2}}-c_{3}.
\end{aligned}
\end{equation}
We now estimate the remaining terms in (\ref{max 7}), and divide the
argument into two cases.

\textbf{Case 1}. Assume that there exists a positive constant
$\zeta$ to be determined such that
\begin{equation}\label{case 1.1}
h_{nn}\leq -\zeta h_{11}.
\end{equation}
Using the critical point condition (\ref{max 1}), we have
\begin{equation*}\label{case 1.2}
\begin{aligned}
F^{ij}\frac{\nabla_{i}h_{11}\nabla_{j}h_{11}}{h_{11}^{2}}&=
F^{ij}\left(\alpha\frac{\nabla_{i}\eta}{\eta}+\Psi^{'}\nabla_{i}\vartheta\right)\left(\alpha\frac{\nabla_{j}\eta}{\eta}+\Psi^{'}\nabla_{j}\vartheta\right)\\
&\leq
(1+\varepsilon^{-1})\alpha^{2}F^{ij}\frac{\nabla_{i}\eta\nabla_{j}\eta}{\eta^{2}}+(1+\varepsilon)(\Psi^{'})^{2}F^{ij}\nabla_{i}\vartheta\nabla_{j}\vartheta
\end{aligned}
\end{equation*}
for any $\varepsilon>0$. Since $|\nabla\eta|\leq
c_{7}(\widetilde{M^{n}})$, so
\begin{equation*}\label{case 1.3}
F^{ij}\frac{\nabla_{i}\eta\nabla_{j}\eta}{\eta^{2}}\leq
c_{8}\frac{\tr F^{ij}}{\eta^{2}},
\end{equation*}
where $c_{8}>0$ depends on $c_{7}$. Therefore, at $X_{0}$, we have
\begin{equation}\label{case 1.4}
\begin{aligned}
0&\geq-\frac{c_{6}\alpha}{\eta}-c_{9}\left[\alpha+(1+\varepsilon^{-1})\alpha^{2}\right]\frac{\tr F^{ij}}{\eta^{2}}+\left[\Psi^{''}-(1+\varepsilon)(\Psi^{'})^{2}\right]F^{ij}\nabla_{i}\vartheta\nabla_{j}\vartheta\\
&\quad +\left(\frac{\partial
f}{\partial\vartheta}\cdot\vartheta-f\right)h_{11}
+\left(\Psi^{'}\vartheta+1\right)F^{ij}h_{im}h_{jm}-c_{3}\\
&\quad +\frac{\partial
f}{\partial\vartheta}\frac{\nabla_{l}h_{11}\langle X,
X_{l}\rangle_{L}}{h_{11}}+\Psi^{'}\nabla_{l}f\langle X,
X_{l}\rangle_{L},
\end{aligned}
\end{equation}
where $c_{9}:=\max\{1, c_{8}\}$ and the concavity of $F(A)$ has been
used. On the other hand, from (\ref{max 1}), the last two terms of
the RHS of (\ref{case 1.4}) are bounded from below
\begin{equation*}\label{case 1.5}
\begin{aligned}
&\quad
\frac{\partial f}{\partial\vartheta}\frac{\nabla_{l}h_{11}\langle X, X_{l}\rangle_{L}}{h_{11}}+\Psi^{'}\nabla_{l}f\langle X, X_{l}\rangle_{L}\\
&= \left(\Psi^{'}\nabla_{l}f-\alpha\frac{\partial
f}{\partial\vartheta}\frac{\nabla_{l}\eta}{\eta}
-\frac{\partial f}{\partial\vartheta}\Psi^{'}\nabla_{l}\vartheta\right)\langle X, X_{l}\rangle_{L}\\
&=
\left(\Psi^{'}\frac{\partial f}{\partial X^{\beta}}\nabla_{l}X^{\beta}-\alpha\frac{\partial f}{\partial\vartheta}\frac{\nabla_{l}\eta}{\eta}-\frac{\partial f}{\partial\vartheta}\Psi^{'}\nabla_{l}\vartheta\right)\langle X, X_{l}\rangle_{L}\\
&\geq -\frac{c_{10}\alpha}{\eta}-c_{11},
\end{aligned}
\end{equation*}
where $c_{10}$ is a positive constant depending on $c_{7}$,
$||\varphi||_{C^{1}(\overline{M^{n}})}$,
$||\psi||_{C^{1}\left(\overline{M^{n}}\times\left[\inf\limits_{\partial
M^{n}}u,\sup\limits_{\partial
M^{n}}u\right]\times\mathbb{R}\right)}$, and $c_{11}>0$ depends on
$||\varphi||_{C^{1}(\overline{M^{n}})}$,
$||\psi||_{C^{1}\left(\overline{M^{n}}\times\left[\inf\limits_{\partial
M^{n}}u,\sup\limits_{\partial
M^{n}}u\right]\times\mathbb{R}\right)}$. Therefore
\begin{equation}\label{case 1.6}
\begin{aligned}
0&\geq-\frac{c_{12}\alpha}{\eta}-c_{9}\left[\alpha+(1+\varepsilon^{-1})\alpha^{2}\right]\frac{\tr F^{ij}}{\eta^{2}}+\left[\Psi^{''}-(1+\varepsilon)(\Psi^{'})^{2}\right]F^{ij}\nabla_{i}\vartheta\nabla_{j}\vartheta\\
&\quad +\left(\frac{\partial
f}{\partial\vartheta}\cdot\vartheta-f\right)h_{11}
+\left(\Psi^{'}\vartheta+1\right)F^{ij}h_{im}h_{jm}-c_{13},
\end{aligned}
\end{equation}
where constant $c_{12}>0$ depends on $c_{6}$, $c_{10}$, and constant
$c_{13}>0$ depends on $c_{3}$ and $c_{11}$. By the Weingarten
formula (\ref{Wein for}), it follows that
$$F^{ij}\nabla_{i}\vartheta\nabla_{j}\vartheta=F^{ij}h_{il}h_{jk}\langle X, X_{l}\rangle_{L}\langle X, X_{k}\rangle_{L}\leq c_{14}F^{ij}h_{il}h_{jk},$$
where $c_{14}$ is a positive constant depending on
$||\varphi||_{C^{1}(\overline{M^{n}})}$, and then we can take a
function $\Psi$ satisfying
\begin{equation}\label{case 1.7}
\Psi^{''}-(1+\varepsilon)(\Psi^{'})^{2}\leq 0.
\end{equation}
Since $M^{n}$ is bounded and $C^2$, there exists a positive constant
$a=a(\rho)>\sup\limits_{M^{n}}u$ such that
$$-a\leq\vartheta<-\sup_{M^{n}}u.$$
Let us take
$$\Psi(\vartheta)=-\log(2a+\vartheta),$$
so we have (\ref{case 1.7}) and
$$\Psi^{'}\vartheta+1+c_{14}(\Psi^{''}-(1+\varepsilon)(\Psi^{'})^{2})\geq\frac{1}{2}   \qquad \mathrm{for}~~\varepsilon\leq\frac{2a^{2}}{c_{14}}.$$
From (\ref{case 1.6}), together with
$$F^{ij}h_{im}h_{jm}=F^{ii}h_{ii}^{2}\geq\frac{\zeta^{2}}{n}h_{11}^{2}\tr F^{ij},$$
which follows from the assumption (\ref{case 1.1}) and the fact
$F^{nn}\geq\frac{1}{n}\tr F^{ij}$, at $X_{0}$, we have that
\begin{equation*}\label{case 1.8}
\begin{aligned}
0&\geq-\frac{c_{12}\alpha}{\eta}-c_{9}\left[\alpha+(1+\varepsilon^{-1})\alpha^{2}\right]\frac{\tr F^{ij}}{\eta^{2}}\\
&\qquad +\left(\frac{\partial
f}{\partial\vartheta}\cdot\vartheta-f\right)h_{11}+\frac{\zeta^{2}}{2n}h_{11}^{2}\tr
F^{ij}-c_{13},
\end{aligned}
\end{equation*}
which implies an upper bound
$$\eta h_{11}\leq\frac{c_{15}}{\zeta}  \qquad \mathrm{at} ~~X_{0},$$
since
$$\tr F^{ij}=\frac{(n-k+1)\sigma_{k-1}}{kf^{k-1}}>0,$$
where $c_{15}$ is a positive constant depending on $c_{9}$,
$c_{12}$, $c_{13}$, $\alpha$, $M^{n}$,
$||\varphi||_{C^{0}(\overline{M^{n}})}$.

\textbf{Case 2}. We now assume that
\begin{equation}\label{case 2.1}
h_{nn}\geq -\zeta h_{11}.
\end{equation}
Since $h_{11}\geq h_{22}\geq\cdots\geq h_{nn}$, we have
\begin{equation*}\label{case 2.2}
h_{ii}\geq -\zeta h_{11} \qquad \mathrm{for~~all}~~i=1,\cdots,n.
\end{equation*}

For a positive constant $\tau$, assume to be $4$, we divide
$\{1,\cdots,n\}$ into two parts as follows
$$I=\{i:\mathcal{P}^{ii}\leq 4\mathcal{P}^{11}\}, \qquad J=\{j:\mathcal{P}^{jj}>4\mathcal{P}^{11}\},$$
where $\mathcal{P}^{ii}:=\frac{\partial \mathcal{P}}{\partial
h_{ii}}=\mathcal{P}_{i}$ is evaluated at $\lambda(X_{0})$. Then for
each $i\in I$, by (\ref{max 1}), we have
\begin{equation*}\label{case 2.3}
\begin{aligned}
\mathcal{P}_{i}\frac{|\nabla_{i}h_{11}|^{2}}{h_{11}^{2}}&=
\mathcal{P}_{i}\left(\alpha\frac{\nabla_{i}\eta}{\eta}+\Psi^{'}\nabla_{i}\vartheta\right)^{2}\\
&\leq
(1+\varepsilon^{-1})\alpha^{2}\mathcal{P}_{i}\frac{|\nabla_{i}\eta|^{2}}{\eta^{2}}+(1+\varepsilon)(\Psi^{'})^{2}\mathcal{P}_{i}|\nabla_{i}\vartheta|^{2}
\end{aligned}
\end{equation*}
for any $\varepsilon>0$.  For each $j\in J$, we have
\begin{equation*}\label{case 2.4}
\begin{aligned}
\alpha \mathcal{P}_{j}\frac{|\nabla_{j}\eta|^{2}}{\eta^{2}}&=
\alpha^{-1}\mathcal{P}_{j}\left(\frac{\nabla_{j}h_{11}}{h_{11}}+\Psi^{'}\nabla_{j}\vartheta\right)^{2}\\
&\leq
\frac{1+\varepsilon}{\alpha}(\Psi^{'})^{2}\mathcal{P}_{j}|\nabla_{j}\vartheta|^{2}+\frac{1+\varepsilon^{-1}}{\alpha}\mathcal{P}_{j}\frac{|\nabla_{j}h_{11}|^{2}}{h_{11}^{2}}
\end{aligned}
\end{equation*}
for any $\varepsilon>0$. Consequently,
\begin{equation*}\label{case 2.5}
\begin{aligned}
&\qquad
\alpha\sum_{i=1}^{n}\mathcal{P}_{i}\frac{|\nabla_{i}\eta|^{2}}{\eta^{2}}+\sum_{i=1}^{n}\mathcal{P}_{i}\frac{|\nabla_{i}h_{11}|^{2}}{h_{11}^{2}}\\
&\leq
\left[\alpha+(1+\varepsilon^{-1})\alpha^{2}\right]\sum_{i\in I}\mathcal{P}_{i}\frac{|\nabla_{i}\eta|^{2}}{\eta^{2}}+(1+\varepsilon)(\Psi^{'})^{2}\sum_{i\in I}\mathcal{P}_{i}|\nabla_{i}\vartheta|^{2}\\
&\qquad
+\frac{1+\varepsilon}{\alpha}(\Psi^{'})^{2}\sum_{j\in J}\mathcal{P}_{j}|\nabla_{j}\vartheta|^{2}+\left[1+(1+\varepsilon^{-1})\alpha^{-1}\right]\sum_{j\in J}\mathcal{P}_{j}\frac{|\nabla_{j}h_{11}|^{2}}{h_{11}^{2}}\\
&\leq
4n\left[\alpha+(1+\varepsilon^{-1})\alpha^{2}\right]\mathcal{P}_{1}\frac{|\nabla_{i}\eta|^{2}}{\eta^{2}}+(1+\varepsilon)(1+\alpha^{-1})(\Psi^{'})^{2}\sum_{i=1}^{n}\mathcal{P}_{i}|\nabla_{i}\vartheta|^{2}\\
&\qquad +\left[1+(1+\varepsilon^{-1})\alpha^{-1}\right]\sum_{j\in
J}\mathcal{P}_{j}\frac{|\nabla_{j}h_{11}|^{2}}{h_{11}^{2}}.
\end{aligned}
\end{equation*}
Using this estimate and (\ref{max 7}), the following inequality
\begin{equation*}\label{case 2.6}
\begin{aligned}
0&\geq
-\frac{c_{6}\alpha}{\eta}-4n\left[\alpha+(1+\varepsilon^{-1})\alpha^{2}\right]\mathcal{P}_{1}\frac{|\nabla_{i}\eta|^{2}}{\eta^{2}}
+\left[\Psi^{''}-(1+\varepsilon)(1+\alpha^{-1})(\Psi^{'})^{2}\right]\mathcal{P}_{i}|\nabla_{i}\vartheta|^{2}\\
&\qquad +\Psi^{'}\nabla_{l}f\langle X,
X_{l}\rangle_{L}+\left(\frac{\partial
f}{\partial\vartheta}\cdot\vartheta-f\right)h_{11}
+(\Psi^{'}\vartheta+1)F^{ij}h_{im}h_{jm}+\frac{\partial f}{\partial\vartheta}\frac{\nabla_{l}h_{11}\langle X, X_{l}\rangle_{L}}{h_{11}}\\
&\qquad
-\frac{1}{h_{11}}F^{ij,pq}\nabla_{1}h_{ij}\nabla_{1}h_{pq}-\left[1+(1+\varepsilon^{-1})\alpha^{-1}\right]\sum_{j\in
J}\mathcal{P}_{j}\frac{|\nabla_{j}h_{11}|^{2}}{h_{11}^{2}}-c_{13}
\end{aligned}
\end{equation*}
holds at $X_{0}$. Then as \textbf{Case 1}, we have that for an
appropriate selection of $\Psi$,
\begin{equation}\label{case 2.7}
\begin{aligned}
0&\geq
-\frac{c_{12}\alpha}{\eta}-c_{16}(\alpha+\alpha^{2})\frac{\mathcal{P}_{1}}{\eta^{2}}+\frac{1}{2n}\mathcal{P}_{1}h_{11}^{2}+\left(\frac{\partial f}{\partial\vartheta}\cdot\vartheta-f\right)h_{11}-c_{13}\\
&\qquad
-\frac{1}{h_{11}}F^{ij,pq}\nabla_{1}h_{ij}\nabla_{1}h_{pq}-\left[1+c_{17}\alpha^{-1}\right]\sum_{j\in
J}\mathcal{P}_{j}\frac{|\nabla_{j}h_{11}|^{2}}{h_{11}^{2}},
\end{aligned}
\end{equation}
where $c_{16}>0$ depends on $n$, $\varepsilon^{-1}$, and
$c_{17}=(1+\varepsilon^{-1})$.

We \textbf{claim} that
\begin{equation}\label{case 2.8}
-\frac{1}{h_{11}}F^{ij,pq}\nabla_{1}h_{ij}\nabla_{1}h_{pq}-\left[1+c_{17}\alpha^{-1}\right]\sum_{j\in
J}\mathcal{P}_{j}\frac{|\nabla_{j}h_{11}|^{2}}{h_{11}^{2}}\geq 0.
\end{equation}
If the \textbf{claim} (\ref{case 2.8}) holds, then from (\ref{case
2.7}) we have
$$\left(\frac{\partial f}{\partial\vartheta}\cdot\vartheta-f\right)h_{11}+\frac{1}{2n}\mathcal{P}_{1}h_{11}^{2}\leq c_{18}(1+\frac{1}{\eta}+\frac{\mathcal{P}_{1}}{\eta^{2}}),$$
from which we again get a bound for $\eta h_{11}$ at $X_{0}$ due to
condition (\ref{f's strictly condition}), where $c_{18}>0$ depends
on $c_{12}$, $c_{13}$, $c_{16}$, $c_{17}$ and $\alpha$.

We now prove the \textbf{claim}. Using the concavity of
$\mathcal{P}$, Lemma \ref{sym} and the Codazzi equation
(\ref{Codazzi-1}), we can obtain
$$-\frac{1}{h_{11}}F^{ij,pq}\nabla_{1}h_{ij}\nabla_{1}h_{pq}\geq -\frac{2}{h_{11}}\sum_{j\in J}\frac{\mathcal{P}_{1}-\mathcal{P}_{j}}{\lambda_{1}-\lambda_{j}}|\nabla_{j}h_{11}|^{2}.$$
We then need to show that
$$-\frac{2(\mathcal{P}_{1}-\mathcal{P}_{j})}{h_{11}(\lambda_{1}-\lambda_{j})}\geq (1+c_{17}\alpha^{-1})\frac{\mathcal{P}_{j}}{h_{11}^{2}} \qquad \mathrm{for~~each}~~j\in J$$
provided that $\alpha$ is sufficiently large.

Set $\delta=c_{17}\alpha^{-1}$, and then we need to show
\begin{equation}\label{case 2.9}
(1-\delta)\mathcal{P}_{j}\lambda_{1}\geq
2\mathcal{P}_{1}\lambda_{1}-(1+\delta)\mathcal{P}_{j}\lambda_{j}
\qquad\qquad \mathrm{for}~~j\in J
\end{equation}
provided $\delta>0$ is sufficiently small. We show this if either
$\lambda_{j}\geq 0$ or $\lambda_{j}\leq 0$ and
$|\lambda_{j}|\leq\zeta\lambda_{1}$ for a sufficiently small
positive constant $\zeta$.

Since $j\in J$, so we have $\mathcal{P}_{j}>4\mathcal{P}_{1}$.
Therefore, if $\lambda_{j}\geq 0$, then (\ref{case 2.9}) is
satisfied if $\delta=1/4$. On the other hand, if $\lambda_{j}\leq
0$, then $|\lambda_{j}|\leq\zeta\lambda_{1}$ by (\ref{case 2.1}),
and therefore (\ref{case 2.9}) is again satisfied if $\delta=1/4$
and $\zeta=1/5$.

The proof of Theorem \ref{main 1.2} is finished.
\end{proof}

\section{Existence and uniqueness} \label{se5}

At end, we can show the existence and uniqueness of solutions to the
PCP (\ref{main equations}) as follows:

\begin{proof} [Proof of Theorem \ref{main 1.3}]
Clearly, the PCP (\ref{main equations}) is equivalent with the
following Dirichlet problem
\begin{equation*}\label{main equations 2}
\left\{
\begin{aligned}
&\sigma_{k}(u,D u,D^{2}u)=\psi(x,u,\vartheta(u,D u)), \qquad &&x\in
M^{n}\subset\mathbb{R}^{n+1}_{1},
\\
&u=\varphi, \qquad&&x\in \partial M^{n},
\end{aligned}
\right.
\end{equation*}
and the method of continuity can be used to get the existence of its
solutions. We divide the argument into three steps as follows:

\textbf{Step 1}. For each $t\in[0,1]$, consider the following
problem\footnote{~Clearly, the operator $\Delta$ in the Dirichlet
problem (\ref{main equations 3}) should be the Laplacian on
$M^{n}\subset\mathscr{H}^{n}(1)$. In fact, this happens to all
symbols $\Delta$ in Section \ref{se5}. For convenience and if
without confusion, we abuse the notation $\Delta$, which in this
paper was used to stand for the Laplacian on different geometric
objects (i.e., on the convex piece $M^{n}$ or the spacelike graphic
hypersurface $\mathcal{G}$).}
\begin{equation}\label{main equations 3}
\left\{
\begin{aligned}
&t\sigma_{k}(u,D u,D^{2}u)+(1-t)\Delta u=\psi(x,u,\vartheta(u,D u)),
\qquad &&x\in M^{n},
\\
&u=\varphi, \qquad&&x\in \partial M^{n}.
\end{aligned}
\right.
\end{equation}
Clearly, for $t=0$, (\ref{main equations 3}) corresponds to the
Dirichlet problem of the Laplace operator. Let $\omega=u-\varphi$,
and then (\ref{main equations 3}) is equivalent to
\begin{equation}\label{main equations 4}
\left\{
\begin{aligned}
&t\sigma_{k}(\omega+\varphi,D(\omega+\varphi),D^{2}(\omega+\varphi))+(1-t)
\Delta(\omega+\varphi) \\
& \qquad\qquad\qquad\qquad\qquad\qquad
=\psi(x,\omega+\varphi,\vartheta((\omega+\varphi),D(\omega+\varphi))),
\quad &&x\in M^{n},
\\
&\omega=0, \quad&&x\in \partial M^{n}.
\end{aligned}
\right.
\end{equation}

Now, we set
$$\mathcal{X}:=\left\{\omega\in C^{2,\alpha}(\overline{M^{n}})|\omega=0~~\mathrm{on}~~\partial M^{n}\right\}$$
and
$$\mathcal{F}(\omega,t):=t\sigma_{k}(\omega+\varphi,D(\omega+\varphi),D^{2}(\omega+\varphi))+(1-t)
\Delta(\omega+\varphi)-\psi\left(x,\omega+\varphi,\vartheta((\omega+\varphi),D(\omega+\varphi))\right).$$
Then the solvability of (\ref{main equations 4}) is equivalent to
find a function $\omega\in\mathcal{X}$ such that
$\mathcal{F}(\omega,t)=0$ in $M^{n}$.

Set
$$I=\{t\in[0,1]| \mathrm{~there~~exists~~a}~~\omega\in\mathcal{X}~~\mathrm{such~~that}~~\mathcal{F}(\omega,t)=0\}.$$
By the standard Schauder theory for the Laplace operator (see, e.g.,
\cite[Chap. 5]{HL}), we know that $0\in I$. The rest is to show
$1\in I$. To do this, we need to prove that $I$ is both open and
closed in $[0,1]$.

\textbf{Step 2}. We first show that $I$ is open. Note that
$\mathcal{F}:\mathcal{X}\times[0,1]\rightarrow
C^{\alpha}(\overline{M^{n}})$ is of class $C^{1}$ and using its
Fr\`{e}chet derivative, we have a uniformly elliptic operator with
$C^{\alpha}$-coefficients. The Fr\`{e}chet derivative here is given
by
$$\mathcal{F}_{\omega}(\omega,t)(\theta):=\lim_{\varepsilon\rightarrow 0}\frac{\mathcal{F}(\omega+\varepsilon\theta,t)-\mathcal{F}(\omega,t)}{\varepsilon}.$$
By the linear Schauder theory, $\mathcal{F}_{\omega}(\omega,t)$ is
an invertible operator from $\mathcal{X}$ to
$C^{\alpha}(\overline{M^{n}})$. Suppose $t_{0}\in I$, i.e.,
$\mathcal{F}(\omega^{t_{0}},t_{0})=0$ for some
$\omega^{t_{0}}\in\mathcal{X}$. By the implicit function theorem,
for any $t$ close to $t_{0}$, there is a unique
$\omega^{t}\in\mathcal{X}$, close to $\omega^{t_{0}}$ in the
$C^{2,\alpha}$-norm, satisfying $\mathcal{F}(\omega^{t},t)=0$. Hence
$t\in I$ for all such $t$, and so $I$ is open.

\textbf{Step 3}. For the closedness, by the lower order estimates in
Section \ref{c1es}, the curvature estimates in Section \ref{S3}
(i.e., Theorems \ref{main1.1}, \ref{main 1.2}) and boundary $C^{2}$
estimates (which correspond to the special case $l=0$ of the $C^{2}$
boundary estimates given in \cite[Section 6]{ggm}), we know that any
$\omega$ in $\mathcal{X}$ of $\mathcal{F}(\omega,t)=0$ in
$\overline{M^{n}}$ satisfies a uniform $C^{2,\alpha}$-estimate,
independent of $t$, i.e.,
$$|\omega^{t}|_{C^{2,\alpha}(\overline{M^{n}})}\leq C, \qquad \mathrm{independent~~of}~~t.$$
Using Arzel\`{a}-Asoli theorem, the closedness of $I$ follows
directly.

Therefore, by the above argument, we know that $I$ is the whole unit
interval. Then the function $\omega^{1}$ is our desired solution of
(\ref{main equations 4}) corresponding to $t=1$. The uniqueness of
solutions to the PCP (\ref{main equations}) can be obtained by
directly using the comparison principle to the $\sigma_{k}$
operator. This completes the proof.
\end{proof}

\section*{Acknowledgments}
This work is partially supported by the NSF of China (Grant Nos.
11801496 and 11926352), the Fok Ying-Tung Education Foundation
(China) and  Hubei Key Laboratory of Applied Mathematics (Hubei
University).

\vspace {1 cm}


\begin{thebibliography}{50}
\setlength{\itemsep}{-0pt} \small

\bibitem{ba} B. Andrews, \emph{Contraction of convex hypersurfaces in Euclidean
space}, Calc. Var. Partial Differential Equations {\bf 2}(1) (1994)
151--171.

\bibitem{b1} R. Bartnik, \emph{Existence of maximal surfaces in asymptotically flat
spacetimes}, Commun. Math. Phys. {\bf 94}(2) (1984) 155--175.


\bibitem{bs}  R. Bartnik, L. Simon, \emph{Spacelike hypersurfaces with prescribed boundary values and mean
curvature}, Commun. Math. Phys. {\bf 87}(1) (1982) 131--152.


\bibitem{Bayard}  P. Bayard, \emph{Dirichlet problem for space-like hypersurfaces with prescribed scalar curvature in $\mathbb{R}^{n, 1}$},
Calc. Var. Partial Differential Equations {\bf 18}(1) (2003) 1--30.

\bibitem{cns1} L. Caffarelli, L. Nirenberg, J. Spruck, \emph{Nonlinear second order elliptic equations IV. Starshaped
compact Weingarten hypersurfaces}, Current Topics in Partial
Differential Equations, Kinokuniya, Tokyo, 1--26.

\bibitem{cns2} L. Caffarelli, L. Nirenberg, J. Spruck, \emph{Nonlinear second-order elliptic equations V. The Dirichlet
problem for Weingarten hypersurfaces}, Comm. Pure Appl. Math. {\bf
47}(1) (1988) 47--70.

\bibitem{fd} F. Delano\`{e}, \emph{The Dirichlet problem for an equation of given Lorentz-Gaussian
Curvature}, Ukrain. Mat. Zh. {\bf 42}(12) (1990) 1704--1710;
translation in Ukrainian Math. J. {\bf 42}(12) (1990) 1538--1545.


\bibitem{yh} Y. Huang, \emph{Curvature estimates of hypersurfaces in the Minkowski
space}, Chinese Ann. Math., Ser. B {\bf 34}(5) (2013) 753--764.


\bibitem{GaoY} Y. Gao, J. Mao, C. X. Wu,
\emph{A stability result for translating space-like graphs in
Lorentz manifolds}, available online at arXiv:2101.05447.

\bibitem{GaoY2} Y. Gao, J. Mao,
\emph{Inverse mean curvature flow for spacelike graphic
hypersurfaces with boundary in Lorentz-Minkowski space
$\mathbb{R}^{n+1}_{1}$ }, available online at arXiv:2104.10600v4.


\bibitem{gm2} Y. Gao, J. Mao, \emph{An anisotropic inverse mean curvature flow for spacelike graphic hypersurfaces with boundary
in Lorentz-Minkowski space $\mathbb{R}^{n+1}_{1}$}, available online
at arXiv:2106.05973.

\bibitem{gm21} Y. Gao, J. Mao, \emph{An anisotropic inverse mean curvature flow for spacelike graphic hypersurfaces with boundary
in Lorentz manifold $M^{n}\times\mathbb{R}$}, preprint.

\bibitem{gm3} Y. Gao, J. Mao, \emph{Inverse Gauss curvature flow in a time cone of Lorentz-Minkowski space
$\mathbb{R}^{n+1}_{1}$}, available online at  arXiv:2108.08686.


\bibitem{glm} Y. Gao, C. Y. Liu, J. Mao, \emph{An anisotropic inverse mean curvature flow for spacelike graphic curves in Lorentz-Minkowski
plane $\mathbb{R}^{2}_{1}$}, available online at arXiv:2109.02191.


\bibitem{ggm} Y. Gao, Y. L. Gao, J. Mao, \emph{The Dirichlet problem for a class of Hessian quotient equations in Lorentz-Minkowski space
$\mathbb{R}^{n+1}_{1}$}, preprint.


\bibitem{cg1} C. Gerhardt, \emph{$H$-surface in Lorentzian
manifolds}, Commun. Math. Phys. {\bf 89}(4) (1983) 523--553.


\bibitem{cg2} C. Gerhardt, \emph{Hypersurfaces of prescribed mean curvature in Lorentzian
manifolds}, Math. Z. {\bf 235}(1) (2000) 83--97.


\bibitem{cg3} C. Gerhardt, \emph{Hypersurfaces of prescribed scalar curvature in Lorentzian
manifolds}, J. Reine Angew. Math. {\bf 554} (2003) 157--199.

\bibitem{gb} B. Guan, \emph{The Dirichlet problem for Monge-Amp\`{e}re equations in non-convex domains and spacelike hypersurfaces of constant Gauss
curvature}, Trans. Amer. Math. Soc. {\bf350}(12) (1998) 4955--4971.

\bibitem{gm} P. F. Guan, X. N. Ma, \emph{The Christoffel-Minkowski problem I: Convexity of solutions of a Hessian
equation}, Invent. Math. {\bf 151}(3) (2003) 553--577.

\bibitem{rl} R. L\'{o}pez, \emph{Differential Geometry of curves and surfaces in Lorentz-Minkowski
space}, available online at arXiv:0810.3351v2.


\bibitem{WJJ} W. M. Sheng, J. Urbas, X. J. Wang,
\emph{Interior curvature bounds for a class of curvature equations},
Duke Math. J. {\bf 123} (2004) 235--264.



\bibitem{ju2} J. Urbas, \emph{The Dirichlet problem for the equation of prescribed scalar curvature in Minkowski
space}, Calc. Var. Partial Differential Equations {\bf 18} (2003)
307--316.

\bibitem{HL} Q. Han, F. H. Lin, \emph{Elliptic Partial Differential Equations}, Second
Edition, Courant Lecture Notes in Mathematics, American Mathematical
Society, Providence, RI, 2011.

\end{thebibliography}
\end{document}